\newtheorem{thm}{Theorem}[section]
\newtheorem{cor}[thm]{Corollary}
\newtheorem{lem}[thm]{Lemma}
\newtheorem{prop}[thm]{Proposition}
\theoremstyle{definition}
\newtheorem{defn}[thm]{Definition}
\theoremstyle{remark}
\newtheorem{rem}[thm]{\bf Remark}
\numberwithin{equation}{section}
\begin{document}
\title[Homotopy Equivalences induced by Balanced Pairs]{Homotopy Equivalences induced by Balanced Pairs}
\author[  Xiao-Wu Chen
] { Xiao-Wu Chen}
\thanks{This project was supported by China Postdoctoral Science Foundation
(No.s 20070420125 and 200801230) and by National Natural Science
Foundation of China (No.10971206).}
 \subjclass{18G25, 18E30,16E65}
\date{Nov. 28, 2009}
\thanks{E-mail: xwchen$\symbol{64}$mail.ustc.edu.cn, URL: http://math.ustc.edu.cn/$\sim$xwchen}
\keywords{balanced pair, cotorsion triple, homotopy category,
relative derived category, left-Gorenstein ring}

\maketitle

\dedicatory{}%
\commby{}%

\begin{abstract}
We introduce the notion of balanced pair of additive subcategories
in an abelian category. We give sufficient conditions under which
the balanced pair of subcategories gives rise to equivalent homotopy
categories of complexes. As an application, we prove that for a
left-Gorenstein ring, there exists a triangle-equivalence between
the homotopy category of its Gorenstein projective modules and the
homotopy category of its Gorenstein injective modules, which
restricts to a triangle-equivalence between the homotopy category of
projective modules and the homotopy category of injective modules.
In the case of commutative Gorenstein rings we prove that up to a
natural isomorphism our equivalence extends Iyengar-Krause's
equivalence.
\end{abstract}

\section{Introduction and Main Results}

 Let $\mathcal{A}$ be an abelian category. Let $\mathcal{X}\subseteq \mathcal{A}$
 be a full additive subcategory which is closed under taking direct summands.
 Let $M\in \mathcal{A}$. A morphism $\theta \colon X \rightarrow M$ is
called a \emph{right $\mathcal{X}$-approximation} of $M$, if $X \in
\mathcal{X}$ and any morphism from an object in $\mathcal{X}$ to $M$
factors through $\theta$. The subcategory $\mathcal{X}$ is called
\emph{contravariantly finite} (= \emph{precovering}) if each object
in $\mathcal{A}$ has a right $\mathcal{X}$-approximation (see
\cite[p.81]{AS1} and \cite[Definition 1.1]{EJ1}).

\vskip 5pt

Recall that for a contravariantly finite subcategory
$\mathcal{X}\subseteq \mathcal{A}$ and an object $M \in \mathcal{A}$
an \emph{$\mathcal{X}$-resolution} of $M$ is  a complex $\cdots
\rightarrow  X^{-2} \stackrel{d^{-2}}\rightarrow X^{-1}
\stackrel{d^{-1}}\rightarrow X^0 \stackrel{\varepsilon} \rightarrow
M \rightarrow 0 $ with each $X^i\in \mathcal{X}$ such that it is
acyclic  by applying the functor ${\rm Hom}_\mathcal{A}(X, -)$ for
each $X \in \mathcal{X}$; this is equivalent to that each induced
morphism $X^{-n}\rightarrow {\rm Ker}d^{-n+1}$ is a right
$\mathcal{X}$-approximation. Here we identify $M$ with ${\rm
Ker}d^{1}$ and $\varepsilon$ with $d^0$. We denote sometimes the
$\mathcal{X}$-resolution by
$X^\bullet\stackrel{\varepsilon}\rightarrow M$ where $X^\bullet=
\cdots \rightarrow  X^{-2} \stackrel{d^{-2}}\rightarrow X^{-1}
\stackrel{d^{-1}}\rightarrow X^0 \rightarrow 0$ is the \emph{deleted
$\mathcal{X}$-resolution} of $M$.  Note that by a version of
Comparison Theorem,  the $\mathcal{X}$-resolution is unique up to
homotopy (\cite[p.169, Ex.2]{EJ}). Recall that the
\emph{$\mathcal{X}$-resolution dimension} $\mathcal{X} {\rm
\mbox{-}res.dim}\; M$ of an object $M$ is defined to be the minimal
integer $n\geq 0$ such that there is an $\mathcal{X}$-resolution
$0\rightarrow X^{-n} \rightarrow \cdots \rightarrow X^0 \rightarrow
M \rightarrow 0$. If there is no such an integer, we set
$\mathcal{X} {\rm \mbox{-}res.dim}\; M=\infty$. Define the
\emph{global $\mathcal{X}$-resolution dimension}
$\mathcal{X}\mbox{-res.dim}\; \mathcal{A}$ to be the supreme of the
$\mathcal{X}$-resolution dimensions of all the objects in
$\mathcal{A}$.

\vskip 5pt

 Let $\mathcal{Y}\subseteq \mathcal{A}$ be another full additive
subcategory which is closed under taking direct summands. Dually one
has the notion of \emph{left $\mathcal{Y}$-approximation} and then
the notions of \emph{covariantly finite subcategory},
\emph{$\mathcal{Y}$-coresolution} and
\emph{$\mathcal{Y}$-coresolution dimension}
$\mathcal{Y}\mbox{-cores.dim}\; N$ of an object
 $N$; furthermore, one has the notion of \emph{global $\mathcal{Y}$-coresolution dimension}
$\mathcal{Y}\mbox{-cores.dim}\; \mathcal{A}$. For details, see
\cite[Section 2]{Bel} and \cite[8.4]{EJ}. \vskip 5pt

 Inspired by \cite[Definition 8.2.13]{EJ}, we introduce the following notion.

\begin{defn}
A pair $(\mathcal{X}, \mathcal{Y})$  of additive subcategories  in
$\mathcal{A}$ is called a \emph{balanced pair} if the following
conditions are satisfied:
\begin{enumerate}
\item[(BP0)] the subcategory $\mathcal{X}$ is  contravariantly
finite and $\mathcal{Y}$ is covariantly finite;
\item[(BP1)] for each object $M$, there is an $\mathcal{X}$-resolution $X^\bullet
\rightarrow  M$ such that it is acyclic by applying the functors
${\rm Hom}_\mathcal{A}(-, Y)$ for all $Y\in \mathcal{Y}$;
\item[(BP2)]  for each object $N$, there is a
$\mathcal{Y}$-coresolution $ N \rightarrow Y^\bullet$ such that it
is acyclic by applying the functors ${\rm Hom}_\mathcal{A}(X, -)$
for all $X\in \mathcal{X}$.
\end{enumerate}
\end{defn}

Balanced pairs enjoy certain ``balanced" property; see Lemma
\ref{lem:balanced}. As mentioned above, the $\mathcal{X}$-resolution
of an object $M$ is unique up to homotopy. Hence the condition (BP1)
may be rephrased as: any $\mathcal{X}$-resolution of $M$ is acyclic
by applying the functors ${\rm Hom}_\mathcal{A}(-, Y)$ for all $Y\in
\mathcal{Y}$. Similar remarks hold for (BP2). Balanced pairs arise
naturally from cotorsion triples; see Proposition
\ref{prop:blanced}.

\vskip 5pt

We say that a contravariantly finite subcategory
$\mathcal{X}\subseteq \mathcal{A}$ is \emph{admissible} if each
right $\mathcal{X}$-approximation is epic. Dually one has the notion
of \emph{coadmissible} covariantly finite subcategory. It turns out
that  for  a balanced pair $(\mathcal{X}, \mathcal{Y})$,
$\mathcal{X}$ is admissible if and only if $\mathcal{Y}$ is
coadmissible; see Corollary \ref{cor:admissible}. In this case, we
say that the balanced pair is \emph{admissible}. Moreover, for an
admissible balanced pair $(\mathcal{X}, \mathcal{Y})$,
$\mathcal{X}\mbox{-res.dim}\;
\mathcal{A}=\mathcal{Y}\mbox{-cores.dim}\; \mathcal{A}$; see
Corollary \ref{cor:finitedimension}. If both the dimensions are
finite, we say that the balanced pair is \emph{of finite dimension}.

\vskip 5pt

For an additive category $\mathfrak{a}$, denote by
$\mathbf{K}(\mathfrak{a})$ the homotopy category of complexes in
$\mathfrak{a}$. Our  main result is as follows. It gives sufficient
conditions under which a balanced pair of subcategories gives rise
to equivalent homotopy categories of complexes.

\vskip 5pt

\noindent {\bf Theorem A.}\; \emph{Let $(\mathcal{X},\mathcal{Y})$
be a balanced pair of additive subcategories in an abelian category
$\mathcal{A}$ which is admissible and of finite dimension. Then
there is a triangle-equivalence $\mathbf{K}(\mathcal{X})\simeq
\mathbf{K}(\mathcal{Y})$.}

\vskip 5pt

The proof of Theorem A makes use of the notion of relative derived
category; see Definition \ref{defn:relativedc} and compare
\cite{Ne1,Bu}. In Section 3, we study the relation between homotopy
categories and relative derived categories.

\vskip 5pt

 Our second result is an application of Theorem A to
Gorenstein homological algebra.

\vskip 5pt

 Let $R$ be a  ring with identity. Denote by
$R\mbox{-Mod}$ the category of (left) $R$-modules, and by
$R\mbox{-Proj}$ (resp. $R\mbox{-Inj}$, $R\mbox{-mod}$) the full
subcategory consisting of projective (resp. injective, finitely
presented) $R$-modules. Recall from \cite[p.400]{AM} that a complex
$P^\bullet$ of projective modules is \emph{totally-acyclic} if it is
acyclic and for any projective module $Q$ the Hom complex ${\rm
Hom}_R(P^\bullet, Q)$ is acyclic (also see \cite{Hol,IK}). Following
\cite{EJ1, EJ} a module $G$ is called \emph{Gorenstein projective}
if there is a totally-acyclic complex $P^\bullet$ such that the
zeroth cocycle $Z^0(P^\bullet)$ is isomorphic to $G$, in which case
the complex $P^\bullet$ is said to be a \emph{complete resolution}
of $G$. Denote by $R\mbox{-GProj}$ the full subcategory  of
$R\mbox{-Mod}$ consisting of Gorenstein projective modules. Note
that $R\mbox{-Proj}\subseteq R\mbox{-GProj}$. Dually one has the
full subcategory $R\mbox{-GInj}$ of $R\mbox{-Mod}$ consisting of
\emph{Gorenstein injective} modules and observes that $R\mbox{-Inj}
\subseteq R\mbox{-GInj}$.

\vskip 5pt

Recall that a ring $R$ is \emph{Gorenstein} if it is two-sided
noetherian and the regular module $R$ has finite injective dimension
on both sides. Following \cite{Bel} a ring $R$ is
\emph{left-Gorenstein} provided that any module in $R\mbox{-Mod}$
has finite projective dimension if and only if it has finite
injective dimension. Note that Gorenstein rings are left-Gorenstein
(by \cite[Corollary 6.11]{Bel} or \cite[Chapter 9]{EJ}), while the
converse is not true in general (see \cite{EEGI}).

\vskip 5pt

 Our second result is as
follows, the proof of which makes use of a characterization theorem
of left-Gorenstein rings by Beligiannis (\cite{Bel}; compare a
recent work by Enochs, Estrada and Garc\'{\i}a Rozas  on cotorsion
pairs on Gorenstein categories \cite{EEG}).

\vskip 5pt

 \noindent {\bf Theorem B.}\; \emph{Let $R$  be a left-Gorenstein ring. Then we have a
triangle-equivalence $\mathbf{K}(R\mbox{-{\rm GProj}}) \simeq
\mathbf{K}(R\mbox{-{\rm GInj}})$, which restricts to a
triangle-equivalence $\mathbf{K}(R\mbox{-{\rm Proj}}) \simeq
\mathbf{K}(R\mbox{-{\rm Inj}})$.}

\vskip 5pt

   Theorem B is related to a recent result by Iyengar and Krause (\cite{IK}).
 In their paper, they prove that for a ring $R$ with a dualizing complex,
 in particular a commutative Gorenstein ring,
 there is a triangle-equivalence $\mathbf{K}(R\mbox{-Proj}) \simeq \mathbf{K}(R\mbox{-Inj})$  which
 is given by tensoring with the dualizing complex;
 see \cite[Theorem 4.2]{IK}. We will
  refer to the equivalence as \emph{Iyengar-Krause's equivalence}.
 In the case of commutative Gorenstein rings, we compare the
 equivalences in Theorem B with  Iyengar-Krause's equivalence. It
 turns out that up to a natural isomorphism the first equivalence in
 Theorem B extends Iyengar-Krause's equivalence;
 see Proposition \ref{prop:comparisonequivalence}.

\vskip 5pt

We draw an immediate consequence of Theorem B. For a triangulated
category $\mathcal{T}$ with arbitrary coproducts, denote by
$\mathcal{T}^c$ the full subcategory of its compact objects
(\cite{Ne}). For an abelian category $\mathcal{A}$, denote by
$\mathbf{D}^b(\mathcal{A})$ its bounded derived category. We denote
by $R^{\rm op}$ the opposite ring of a ring $R$.

\vskip 5pt

\noindent {\bf Corollary C.}\; \emph{Let $R$ be a left-Gorenstein
ring which is left noetherian and right coherent. Then there is a
duality $\mathbf{D}^b(R^{\rm op}\mbox{-{\rm mod}}) \simeq
\mathbf{D}^b(R\mbox{-{\rm mod}})$ of triangulated categories.}

\begin{proof} We apply Theorem B to get a triangle-equivalence
$\mathbf{K}(R\mbox{-{\rm Proj}}) \simeq \mathbf{K}(R\mbox{-{\rm
Inj}})$. Note that there are  natural identifications
$\mathbf{K}(R\mbox{-Proj})^c \simeq \mathbf{D}^b(R^{\rm
op}\mbox{-mod})^{\rm op}$ by Neeman (\cite[Proposition 7.12]{Ne2};
compare J{\o}rgensen \cite[Theorem 3.2]{J}), and
$\mathbf{K}(R\mbox{-Inj})^c\simeq \mathbf{D}^b(R\mbox{-mod})$ by
Krause (\cite[Proposition 2.3(2)]{Kr}). Finally observe that a
triangle-equivalence restricts to a triangle-equivalence between the
full subcategories of compact objects.
\end{proof}

Let us remark that the duality above for commutative Gorenstein
rings, more generally, for rings with dualizing complexes,  is well
known (compare \cite[Chapter V]{Har} and \cite[Proposition
3.4(2)]{IK}). It is closely  related to Grothendieck's duality
theory; see \cite[Section 2]{Ne2}.

\vskip 5pt

 We fix some notation. Recall that a complex $X^\bullet=(X^n, d_X^n)_{n\in \mathbb{Z}}$
  in  an additive category $\mathfrak{a}$ is
  a sequence $X^n$ of objects  together with differentials $d_X^n\colon  X^n\rightarrow
X^{n+1}$ such that $d_X^{n+1}\circ d_X^n=0$; a chain map $f^\bullet
\colon X^\bullet \rightarrow Y^\bullet $ between complexes consists
of morphisms $f^n\colon X^n\rightarrow Y^n$ which commute with the
differentials. Denote by $\mathbf{C}(\mathfrak{a})$ the category of
complexes in $\mathfrak{a}$ and by $\mathbf{K}(\mathfrak{a})$ the
homotopy category; denote by $[1]$ the \emph{shift functor} on both
$\mathbf{C}(\mathfrak{a})$ and $\mathbf{K}(\mathfrak{a})$  which is
defined by $(X^\bullet[1])^n=X^{n+1}$and
$d^{n}_{X[1]}=(-1)d_X^{n+1}$. Recall that the \emph{mapping cone}
${\rm Cone}(f^\bullet)$ of a chain map $f^\bullet:
X^\bullet\rightarrow Y^\bullet$  is a complex such that ${\rm
Cone}(f^\bullet)^n=X^{n+1}\oplus Y^n$ and $d^n_{{\rm
Cone}(f^\bullet)}=\begin{pmatrix}-d_X^{n+1} & 0\\
                                   f^{n+1} & d_Y^n\end{pmatrix}$.
We have a \emph{distinguished triangle}  $X^\bullet
\stackrel{f^\bullet}\rightarrow Y^\bullet \stackrel{{0\choose
1}}\rightarrow {\rm Cone}(f^\bullet) \stackrel{(1\; 0)}\rightarrow
X^\bullet[1]$ in $\mathbf{K}(\mathfrak{a})$ associated to the chain
map $f^\bullet$. We also need the \emph{degree-shift functor} $(1)$
on complexes defined by $(X^\bullet(1))^{n}=X^{n+1}$ and
$d^n_{X(1)}=d_X^{n+1}$. Denote by $(r)$ the $r$-th power of the
functor $(1)$. For a complex $X^\bullet$ in an abelian category,
denote by $H^n(X^\bullet)$ the $n$-th cohomology. For more on
homotopy categories and triangulated categories, we refer to
\cite{V1, Har, Iv, Ha1, GM1, Ne}.

\section{Balanced Pair and Cotorsion Triple}
In this section, we will study various properties of balanced pairs
of subcategories in an abelian category. Balanced pairs arise
naturally from cotorsion triples, while the latter are closely
related to the notion of cotorsion pair (\cite{Hov,EEG}).

\vskip 5pt

Let $\mathcal{A}$ be an abelian category. Let us emphasize that in
what follows all subcategories in $\mathcal{A}$ are full additive
subcategories closed under taking direct summands. Recall that we
have introduced the notion of balanced pair of subcategories in
Section 1. The following ``balanced" property of a balanced pair
justifies the terminology.

\begin{lem}\label{lem:balanced}
Let $(\mathcal{X}, \mathcal{Y})$ be a balanced pair of subcategories
in $\mathcal{A}$.  Let $M, N \in \mathcal{A}$ with an
$\mathcal{X}$-resolution $X^\bullet \rightarrow M$ and a
 $\mathcal{Y}$-coresolution $N \rightarrow Y^\bullet$. Then for each $n\geq 0$
 there exists a natural isomorphism
 $$H^n({\rm Hom}_\mathcal{A}(X^\bullet, N))\simeq H^n({\rm Hom}_\mathcal{A}(M,
 Y^\bullet)).$$ \end{lem}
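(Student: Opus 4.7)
The plan is to adapt the standard balanced-functor argument used, for instance, to show that $\mathrm{Ext}$ can be computed from a resolution in either variable. Form the first-quadrant double complex
$$A^{p,q} \;=\; \mathrm{Hom}_\mathcal{A}(X^{-p}, Y^q), \qquad p,q \geq 0,$$
with horizontal differential induced by $d_Y^q$ and vertical differential induced (with the usual sign convention) by the differential of $X^\bullet$. This is well defined because $X^{-p} = 0$ for $p < 0$ and $Y^q = 0$ for $q < 0$, so $A$ genuinely sits in the first quadrant.

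Next I analyze rows and columns separately. Fix $p \geq 0$. Since $X^{-p} \in \mathcal{X}$, axiom (BP2) applied to $N$ says that the augmented coresolution $0 \to N \to Y^0 \to Y^1 \to \cdots$ remains exact after applying $\mathrm{Hom}_\mathcal{A}(X^{-p}, -)$; hence the $p$-th row $A^{p,\bullet}$ has cohomology concentrated in degree $0$, equal to $\mathrm{Hom}_\mathcal{A}(X^{-p}, N)$. Dually, fix $q \geq 0$. Since $Y^q \in \mathcal{Y}$, axiom (BP1) applied to $M$ says that the augmented resolution $\cdots \to X^{-1} \to X^0 \to M \to 0$ remains exact after applying $\mathrm{Hom}_\mathcal{A}(-, Y^q)$; hence the $q$-th column $A^{\bullet,q}$ has cohomology concentrated in degree $0$, equal to $\mathrm{Hom}_\mathcal{A}(M, Y^q)$.

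Then I invoke the two standard spectral sequences of the first-quadrant double complex $A$, both of which converge to $H^\bullet(\mathrm{Tot}(A))$. The row analysis shows the first spectral sequence has $E_2$-page concentrated in the row $q = 0$ with $E_2^{p,0} = H^p(\mathrm{Hom}_\mathcal{A}(X^\bullet, N))$, so it collapses to give $H^n(\mathrm{Tot}(A)) \simeq H^n(\mathrm{Hom}_\mathcal{A}(X^\bullet, N))$. The column analysis shows the second spectral sequence has $E_2$-page concentrated in the column $p = 0$ with $E_2^{0,q} = H^q(\mathrm{Hom}_\mathcal{A}(M, Y^\bullet))$, so it collapses to give $H^n(\mathrm{Tot}(A)) \simeq H^n(\mathrm{Hom}_\mathcal{A}(M, Y^\bullet))$. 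Composing these two isomorphisms yields the required one. Naturality in $M$ and $N$ is automatic from the functoriality of $\mathrm{Hom}_\mathcal{A}$ combined with the Comparison Theorem cited in the introduction, which lifts any pair of morphisms to morphisms of resolutions and coresolutions, and hence to a morphism of the double complexes $A$.

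I do not foresee a genuine obstacle: axioms (BP1) and (BP2) are designed to supply exactly the row and column exactness that the spectral-sequence argument demands, and first-quadrant convergence is automatic. The only points requiring attention are the standard sign and indexing bookkeeping for $\mathrm{Tot}(A)$, and the verification that the identification of the $E_2$-entries is compatible with maps of the data $(M, N)$, which is what yields naturality rather than a mere isomorphism.
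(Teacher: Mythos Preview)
Your proposal is correct and follows precisely the second approach the paper sketches: the paper's own proof merely cites \cite[Theorem 8.2.14]{EJ} and remarks that one can alternatively use the two collapsing spectral sequences of the Hom bicomplex $\mathrm{Hom}_\mathcal{A}(X^\bullet, Y^\bullet)$, which is exactly what you carry out in detail. Your treatment of naturality via the Comparison Theorem is a welcome addition that the paper leaves implicit.
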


\begin{proof}
The result can be proven similarly as  \cite[Theorem 8.2.14]{EJ}.
One can also prove it by considering the two collapsing spectral
sequences associated
 to the Hom bicomplex ${\rm Hom}_\mathcal{A}(X^\bullet, Y^\bullet)$
 as in the classical homological algebra (\cite[Chapter XVI, Section 1]{CE}).
\end{proof}

Let $\mathcal{X}\subseteq \mathcal{A}$ be a subcategory. Let
$Z^\bullet$ be a complex in $\mathcal{A}$. We say that $Z^\bullet$
is \emph{right $\mathcal{X}$-acyclic} provided that the Hom
complexes ${\rm Hom}_\mathcal{A}(X, Z^\bullet)$ are acyclic for all
$X\in \mathcal{X}$. Dually we have the notion of \emph{left
$\mathcal{Y}$-acyclic complex}. \vskip 5pt

 The following observation is useful.

\begin{prop}\label{firstprop}
Let $\mathcal{A}$ be an abelian category, and let $\mathcal{X}$
(resp. $\mathcal{Y}$) be a contravariantly finite (resp. covariantly
finite) subcategory. Then the pair $(\mathcal{X}, \mathcal{Y})$ is
balanced if and only if the class of right $\mathcal{X}$-acyclic
complexes coincides with the class of left $\mathcal{Y}$-acyclic
complexes.
\end{prop}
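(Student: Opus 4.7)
The plan is to handle the two directions separately.

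For the $(\Leftarrow)$ direction, I would assume that the two classes coincide. For any $M\in\mathcal{A}$ and any $\mathcal{X}$-resolution $X^\bullet\to M$, the augmented complex $\cdots\to X^{-1}\to X^0\to M\to 0$ is right $\mathcal{X}$-acyclic by the defining property of an $\mathcal{X}$-resolution, so by hypothesis it is also left $\mathcal{Y}$-acyclic. This is exactly (BP1); the condition (BP2) follows dually.

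For the $(\Rightarrow)$ direction, I would assume $(\mathcal{X},\mathcal{Y})$ is balanced. By the symmetric roles of $\mathcal{X}$ and $\mathcal{Y}$, it suffices to show every right $\mathcal{X}$-acyclic complex $Z^\bullet$ is left $\mathcal{Y}$-acyclic. Fix $Y\in\mathcal{Y}$; the goal is to show $\text{Hom}_\mathcal{A}(Z^\bullet,Y)$ is acyclic. My approach is to build a Cartan--Eilenberg-type $\mathcal{X}$-resolution $X^{\bullet,\bullet}$ of $Z^\bullet$: for each $p$, take an $\mathcal{X}$-resolution $X^{\bullet,p}\to Z^p$ by contravariant finiteness, and assemble these into a bicomplex by lifting the differentials $d_Z^p$ via the comparison theorem. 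I then analyse the bicomplex $\text{Hom}_\mathcal{A}(X^{\bullet,\bullet},Y)$ using two spectral sequences.

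The column filtration invokes (BP1): for each $p$ the augmented resolution $\widetilde X^{\bullet,p}\to Z^p$ is left $\mathcal{Y}$-acyclic, so the column $\text{Hom}_\mathcal{A}(X^{\bullet,p},Y)$ has cohomology $\text{Hom}_\mathcal{A}(Z^p,Y)$ in degree zero and vanishes elsewhere. The $E_2$ page collapses, giving $H^n(\text{Tot}(\text{Hom}_\mathcal{A}(X^{\bullet,\bullet},Y)))\cong H^n(\text{Hom}_\mathcal{A}(Z^\bullet,Y))$. To see this total complex is acyclic, I would show that $T^\bullet:=\text{Tot}(X^{\bullet,\bullet})$, viewed as a complex in $\mathcal{X}$, is null-homotopic. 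Applying $\text{Hom}_\mathcal{A}(X',-)$ for $X'\in\mathcal{X}$ to the augmentation $T^\bullet\to Z^\bullet$ yields a quasi-isomorphism (each column being an $\mathcal{X}$-resolution of its $Z^p$), so the right $\mathcal{X}$-acyclicity of $Z^\bullet$ transfers to $T^\bullet$. A key intermediate lemma I would establish is that a suitably bounded right $\mathcal{X}$-acyclic complex of $\mathcal{X}$-objects admits a contracting homotopy, built inductively from one end using the acyclicity of $\text{Hom}_\mathcal{A}(T^n,T^\bullet)$ for each $T^n\in\mathcal{X}$. This makes $\text{Hom}_\mathcal{A}(T^\bullet,Y)$ null-homotopic, hence acyclic, which combined with the collapse above gives $\text{Hom}_\mathcal{A}(Z^\bullet,Y)$ acyclic.

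The main technical obstacle is constructing the bicomplex $X^{\bullet,\bullet}$ with a strict $d_h\circ d_h=0$: a straightforward horseshoe lemma for $\mathcal{X}$-resolutions is not available, and lifts of the differentials are determined only up to homotopy, so iterated lifts square to zero only up to homotopy. The standard iterative Cartan--Eilenberg refinement should handle this. A secondary concern is convergence of the spectral sequences and validity of the null-homotopy construction when $Z^\bullet$ is unbounded; I expect this can be addressed by suitable truncation arguments.
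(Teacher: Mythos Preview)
Your $(\Leftarrow)$ direction matches the paper. For $(\Rightarrow)$, the paper takes a far more elementary route than your Cartan--Eilenberg/spectral-sequence plan. It observes that a complex $Z^\bullet$ is right $\mathcal{X}$-acyclic (resp.\ left $\mathcal{Y}$-acyclic) if and only if each induced three-term left exact sequence $0\to\ker d_Z^n\to Z^n\to\ker d_Z^{n+1}\to 0$ is. So it suffices to show that a single right $\mathcal{X}$-acyclic left exact sequence $0\to M'\to M\to M''\to 0$ is left $\mathcal{Y}$-acyclic. A relative Horseshoe Lemma (applicable precisely because the sequence is right $\mathcal{X}$-acyclic) yields $\mathcal{X}$-resolutions of $M',M,M''$ sitting in a degreewise-split short exact sequence of complexes; apply $\mathrm{Hom}_\mathcal{A}(-,Y)$, use (BP1) on each column, and read off the result from the long exact sequence. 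No bicomplex, no spectral sequence, no boundedness issues.

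Your plan has two genuine obstacles beyond what you flag. First, the ``key intermediate lemma'' that a right $\mathcal{X}$-acyclic complex of $\mathcal{X}$-objects is contractible is false without a one-sided bound: already for $\mathcal{X}=R\text{-Proj}$, any non-split complete resolution is a counterexample. Here $\mathcal{X}\text{-res.dim}\,\mathcal{A}$ is not assumed finite and $Z^\bullet$ is unbounded, so your $T^\bullet$ is generally unbounded in both directions, and its terms may be infinite coproducts and hence not lie in $\mathcal{X}$. Second, a strict bicomplex with $d_h^2=0$ cannot be obtained by merely lifting differentials; the paper itself faces exactly this later and resolves it only via a quasi-bicomplex with higher homotopies under the additional hypothesis of finite $\mathcal{X}$-resolution dimension. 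The natural repair---apply the relative Horseshoe to the short sequences $0\to\ker d^n\to Z^n\to\ker d^{n+1}\to 0$---\emph{is} the paper's reduction, so once patched your argument collapses into theirs.
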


\begin{proof} Note that an $\mathcal{X}$-resolution is right
$\mathcal{X}$-acyclic and the condition (BP1) says that an
$\mathcal{X}$-resolution is left $\mathcal{Y}$-acyclic. Dual remarks
hold for (BP2). Thus the ``if" part follows immediately.

To see the ``only if" part, assume that the pair $(\mathcal{X},
\mathcal{Y})$ is balanced. We only show that right
$\mathcal{X}$-acyclic complexes are left $\mathcal{Y}$-acyclic and
leave the dual part to the reader. Assume that $Z^\bullet=(Z^n,
d_Z^n)_{n\in \mathbb{Z}}$ is a complex. Consider the induced
``short" complexes $0\rightarrow {\rm Ker}d_Z^{n} \rightarrow Z^n
\rightarrow {\rm Ker}d_Z^{n+1} \rightarrow 0$ for all $n\in
\mathbb{Z}$. Let us remark that such ``short" complexes are  left
exact sequences, and that they are not necessarily short exact
sequences. Since $Z^\bullet$ is right $\mathcal{X}$-acyclic, all the
induced ``short" complexes are right $\mathcal{X}$-acyclic. Observe
that if all the induced  ``short" complexes are left
$\mathcal{Y}$-acyclic, then so is $Z^\bullet$. Therefore it suffices
to show that a left short exact sequence which is right
$\mathcal{X}$-acyclic is necessarily left $\mathcal{Y}$-acyclic.

  Let $0
\rightarrow M' \rightarrow M \rightarrow M'' \rightarrow 0$ be a
left exact sequence which is right $\mathcal{X}$-acyclic. We will
show that it is left $\mathcal{Y}$-acyclic. Choose
$\mathcal{X}$-resolutions $X'^\bullet \rightarrow M'$ and
$X''^\bullet \rightarrow M''$. By a version of Horseshoe Lemma
(\cite[Lemma 8.2.1]{EJ}), we have a commutative diagram

\[\xymatrix{
0\ar[r] & X'^\bullet \ar[r]^{{1\choose 0}} \ar[d] & X^\bullet
\ar[r]^{(0\;  1)} \ar[d] &
X''^\bullet \ar[d]\ar[r] & 0 \\
0 \ar[r] & M'  \ar[r] & M \ar[r] & M'' \ar[r] & 0}\]
 where the
complex $X^\bullet$ satisfies that for each $n \in \mathbb{Z}$,
$X^n=X'^n\oplus X''^n$ and that the middle column is an
$\mathcal{X}$-resolution. Then  for each $Y\in \mathcal{Y}$ we have
a commutative diagram of abelian groups

\[\xymatrix{
0 \ar[r] & {\rm Hom}_\mathcal{A}(M'', Y) \ar[d]\ar[r] &  {\rm
Hom}_\mathcal{A}(M, Y) \ar[d]\ar[r] & {\rm Hom}_\mathcal{A}(M', Y)
\ar[r] \ar[d]
& 0\\
 0\ar[r] & {\rm Hom}_\mathcal{A}(X''^\bullet, Y) \ar[r]^{{0\choose 1}}  & {\rm Hom}_\mathcal{A}(X^\bullet, Y)
 \ar[r]^{(1\; 0)}  & {\rm Hom}_\mathcal{A}(X'^\bullet, Y)
\ar[r] & 0 }\]  By (BP1) each  column is an acyclic complex. The
bottom row is a sequence of complexes, every degree of which is a
split short exact sequence. We infer that the upper row is exact by
the homology exact sequence. Therefore we deduce that  $0
\rightarrow M' \rightarrow M \rightarrow M'' \rightarrow 0$ is left
$\mathcal{Y}$-acyclic, as required.
\end{proof}

\vskip 5pt

Recall that a  contravariantly finite subcategory $\mathcal{X}$ of
$\mathcal{A}$ is admissible provided that each right
$\mathcal{X}$-approximation is epic. It is equivalent to that any
right $\mathcal{X}$-acyclic complex is indeed acyclic. Similar
remarks hold for coadmissible covariantly finite subcategories. Then
we observe the following  direct consequence of Proposition
\ref{firstprop}.

\begin{cor}\label{cor:admissible}
Let $(\mathcal{X}, \mathcal{Y})$ be a balanced pair. Then
$\mathcal{X}$ is admissible if and only if $\mathcal{Y}$ is
coadmissible. \hfill $\square$
\end{cor}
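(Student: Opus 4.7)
The plan is to translate both conditions ``$\mathcal{X}$ admissible'' and ``$\mathcal{Y}$ coadmissible'' into the same statement about complexes, and then invoke Proposition \ref{firstprop}. The bridge is the characterization recorded in the paragraph immediately preceding the corollary: $\mathcal{X}$ is admissible if and only if every right $\mathcal{X}$-acyclic complex in $\mathcal{A}$ is acyclic, and dually $\mathcal{Y}$ is coadmissible if and only if every left $\mathcal{Y}$-acyclic complex is acyclic. Granting this bridge, the corollary falls out at once: since $(\mathcal{X},\mathcal{Y})$ is balanced, Proposition \ref{firstprop} says that the class of right $\mathcal{X}$-acyclic complexes coincides with the class of left $\mathcal{Y}$-acyclic complexes, so the two conditions are literally the same statement.

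To justify the bridge (which is asserted but not proved above), for $\mathcal{X}$ I would argue as follows; the dual takes care of $\mathcal{Y}$. The ``if'' direction is immediate: any $\mathcal{X}$-resolution $X^\bullet \to M$ is already right $\mathcal{X}$-acyclic, so the hypothesis forces it to be acyclic, and in particular the right $\mathcal{X}$-approximation $X^0 \to M$ is epimorphic; since every right $\mathcal{X}$-approximation arises this way (up to completing a trivial summand), $\mathcal{X}$ is admissible. For the ``only if'' direction, take a right $\mathcal{X}$-acyclic complex $Z^\bullet$ and, for each $n$, choose a right $\mathcal{X}$-approximation $\theta\colon X \to \ker d_Z^{n+1}$. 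Admissibility of $\mathcal{X}$ makes $\theta$ epic. On the other hand, acyclicity of $\mathrm{Hom}_\mathcal{A}(X, Z^\bullet)$ at degree $n+1$ says precisely that every map $X \to \ker d_Z^{n+1}$ lifts through $Z^n \to \ker d_Z^{n+1}$, so in particular $\theta$ lifts. Composing these observations, $Z^n \to \ker d_Z^{n+1}$ is epic for every $n$, i.e., $Z^\bullet$ is acyclic.

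No substantive obstacle is expected; once the reformulation of (co)admissibility as the coincidence of the respective relative-acyclicity class with the genuine-acyclicity class is in place, the corollary is a one-line application of Proposition \ref{firstprop}. The only mildly non-trivial step is the ``only if'' half of the bridge, which is a standard lifting-through-an-approximation argument of the sort used throughout relative homological algebra.
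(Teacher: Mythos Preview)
Your proposal is correct and follows exactly the paper's approach: the paragraph preceding the corollary already records the equivalence between (co)admissibility and the condition that every right $\mathcal{X}$-acyclic (resp.\ left $\mathcal{Y}$-acyclic) complex is acyclic, and the corollary is then an immediate consequence of Proposition~\ref{firstprop}. Your additional justification of the bridge is sound and simply fills in what the paper asserts without proof.
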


Recall that in the case of the corollary above, we say that the
balanced pair $(\mathcal{X}, \mathcal{Y})$ is admissible.

\vskip 5pt

The following result on resolution dimensions is well known. However
it seems that there are no precise references. We include here a
proof.

\begin{lem} \label{homologydimension} Let $\mathcal{X}\subseteq \mathcal{A}$ be
a contravariantly finite subcategory. Let $M\in \mathcal{A}$ and let
$n_0\geq 0$. Assume that $\mathcal{X}$ is admissible. The following
statements are equivalent: \begin{enumerate}
\item $\mathcal{X}\mbox{-res.dim}\; M \leq n_0$;
\item for each $\mathcal{X}$-resolution $X^\bullet
\rightarrow M$ and each object $N$,  $H^n({\rm
Hom}_\mathcal{A}(X^\bullet, N))=0$ for all $n> n_0$;
\item for each $\mathcal{X}$-resolution $X^\bullet \rightarrow M$ with
$X^\bullet=(X^{-n}, d_X^{-n})_{n\geq 0}$, the object ${\rm
Ker}d_X^{-n_0+1}$ belongs to $\mathcal{X}$.
\end{enumerate}
\end{lem}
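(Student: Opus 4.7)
The plan is to prove the cycle $(1) \Rightarrow (2) \Rightarrow (3) \Rightarrow (1)$, with admissibility of $\mathcal{X}$ playing its essential role in the middle implication.

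For $(1) \Rightarrow (2)$ I would use the Comparison Theorem noted just after the definition of $\mathcal{X}$-resolution: any two deleted $\mathcal{X}$-resolutions of $M$ are homotopy equivalent. Comparing the given $X^\bullet$ with a truncated resolution $0 \to X'^{-n_0} \to \cdots \to X'^0 \to M \to 0$ of length $\leq n_0$ guaranteed by (1), the Hom complexes ${\rm Hom}_\mathcal{A}(X^\bullet, N)$ and ${\rm Hom}_\mathcal{A}(X'^\bullet, N)$ become homotopy equivalent, and the latter lives in cohomological degrees $[0, n_0]$, so its cohomology vanishes above $n_0$. For $(3) \Rightarrow (1)$ I would just truncate: admissibility makes the given $\mathcal{X}$-resolution into an exact sequence, so if $K := {\rm Ker}\, d_X^{-n_0+1}$ lies in $\mathcal{X}$ then $0 \to K \to X^{-n_0+1} \to \cdots \to X^0 \to M \to 0$ is an $\mathcal{X}$-resolution of $M$ of length $\leq n_0$, the right $\mathcal{X}$-acyclicity being inherited from $X^\bullet \to M$.

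The main step, and what I expect to be the central obstacle, is $(2) \Rightarrow (3)$. Writing $K^{-n} := {\rm Ker}\, d_X^{-n+1}$, admissibility promotes the canonical left-exact ``short complexes'' to genuine short exact sequences
\[
0 \longrightarrow K^{-n-1} \longrightarrow X^{-n} \longrightarrow K^{-n} \longrightarrow 0,
\]
which the resolution hypothesis makes right $\mathcal{X}$-acyclic. The Horseshoe Lemma then lifts each to a degreewise split short exact sequence of $\mathcal{X}$-resolutions, and applying ${\rm Hom}_\mathcal{A}(-, N)$ produces long exact sequences in the relative Ext groups ${\rm Ext}^i_\mathcal{X}(L, N) := H^i({\rm Hom}_\mathcal{A}(L^\bullet, N))$. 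Since the trivial resolution of any $X' \in \mathcal{X}$ gives ${\rm Ext}^i_\mathcal{X}(X', N) = 0$ for $i \geq 1$, iterated dimension shifting yields
\[
H^{n_0+1}({\rm Hom}_\mathcal{A}(X^\bullet, N)) \;\simeq\; {\rm Ext}^1_\mathcal{X}(K^{-n_0}, N).
\]
Taking $N = K^{-n_0-1}$ and invoking (2) forces this Ext to vanish, so by the connecting map the identity of $K^{-n_0-1}$ lies in the image of ${\rm Hom}_\mathcal{A}(X^{-n_0}, K^{-n_0-1}) \to {\rm Hom}_\mathcal{A}(K^{-n_0-1}, K^{-n_0-1})$; equivalently, the inclusion $K^{-n_0-1} \hookrightarrow X^{-n_0}$ admits a retraction. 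Hence $K^{-n_0}$ is a direct summand of $X^{-n_0} \in \mathcal{X}$, and closure of $\mathcal{X}$ under direct summands gives (3).

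The delicate point is the dimension-shifting in the relative setting: both admissibility (to get honest short exact sequences) and the $\mathcal{X}$-resolution hypothesis (to get their right $\mathcal{X}$-acyclicity) are indispensable, as together they are precisely what produces the long exact sequences of relative Ext groups used above.
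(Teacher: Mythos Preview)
Your proof is correct. The implications $(1)\Rightarrow(2)$ and $(3)\Rightarrow(1)$ match the paper's argument exactly.

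For $(2)\Rightarrow(3)$ you reach the same destination as the paper---showing that the short exact sequence $0\to K^{-n_0-1}\to X^{-n_0}\to K^{-n_0}\to 0$ splits by choosing $N=K^{-n_0-1}={\rm Ker}\,d_X^{-n_0}$---but via a more elaborate route. You build the relative ${\rm Ext}_\mathcal{X}$ functors, invoke the Horseshoe Lemma to obtain long exact sequences, and dimension-shift to identify $H^{n_0+1}({\rm Hom}_\mathcal{A}(X^\bullet,N))$ with ${\rm Ext}^1_\mathcal{X}(K^{-n_0},N)$. The paper bypasses all of this machinery: it simply observes that the induced map $\bar{d}\colon X^{-n_0-1}\to {\rm Ker}\,d_X^{-n_0}$ is a cocycle in the Hom complex at degree $n_0+1$, so the vanishing hypothesis makes it a coboundary, $\bar{d}=\pi\circ d_X^{-n_0-1}$ for some $\pi\colon X^{-n_0}\to{\rm Ker}\,d_X^{-n_0}$; then admissibility (making $\bar{d}$ epic) forces $\pi\circ{\rm inc}={\rm Id}$, giving the retraction directly. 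Your approach has the advantage of situating the result within the familiar framework of derived-functor dimension-shifting, while the paper's argument is more elementary and self-contained---in particular it never needs to build $\mathcal{X}$-resolutions of the syzygies or appeal to Horseshoe. Note also that your dimension-shifting could be shortened: the tail $\cdots\to X^{-n_0-1}\to X^{-n_0}\to K^{-n_0}\to 0$ is already a (shifted) $\mathcal{X}$-resolution of $K^{-n_0}$, which gives the identification $H^{n_0+1}({\rm Hom}_\mathcal{A}(X^\bullet,N))\simeq{\rm Ext}^1_\mathcal{X}(K^{-n_0},N)$ immediately, without iterating through the intermediate kernels.
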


\begin{proof} For ``$(1) \Rightarrow (2)$", choose an
$\mathcal{X}$-resolution  $X_0^\bullet \rightarrow M$ such that
$X_0^{-n}=0$ for $n> n_0$. Then $X_0^\bullet$ and $X^\bullet$ are
homotopically equivalent, thus so are the Hom complexes ${\rm
Hom}_\mathcal{A}(X_0^\bullet, N)$ and ${\rm
Hom}_\mathcal{A}(X^\bullet, N)$. Hence for each $n$ we have
$H^n({\rm Hom}_\mathcal{A}(X_0^\bullet, N))\simeq H^n({\rm
Hom}_\mathcal{A}(X^\bullet, N))$. Then (2) follows directly.

\vskip 3pt

 For ``$(2) \Rightarrow (3)$", note that $H^{n_0+1}({\rm
Hom}_\mathcal{A}(X^\bullet, {\rm Ker}d_X^{-n_0}))=0$ implies that
the naturally induced morphism $\bar{d}\colon X^{-n_0-1} \rightarrow
{\rm Ker}d_X^{-n_0}$ factors through $d_X^{-n_0-1}$, say there is a
morphism $\pi\colon  X^{-n_0} \rightarrow {\rm Ker}d_X^{-n_0+1}$
such that $\bar{d}=\pi\circ d_X^{-n_0-1}$. Observe that
$d_X^{-n_0-1}={\rm inc}\circ \bar{d}$, where ``${\rm inc}$" is the
inclusion morphism of ${\rm Ker}d_X^{-n_0}$ into $X^{-n_0}$. Then we
have $\bar{d}=(\pi\circ {\rm inc})\circ\bar{d}$. Note that $\bar{d}$
is a right $\mathcal{X}$-approximation and that $\mathcal{X}$ is
admissible. Hence $\bar{d}$ is epic, and then  $\pi\circ {\rm
inc}={\rm Id}_{{\rm Ker}d_X^{-n_0}}$. Consider the left exact
sequence $0\rightarrow {\rm Ker}d_X^{-n_0}\stackrel{\rm inc}
\rightarrow X^{-n_0} \rightarrow {\rm Ker}d_X^{-n_0+1} \rightarrow
0$. Since the right side morphism is a right
$\mathcal{X}$-approximation, it is necessarily epic and then the
sequence is exact. Because the morphism ``${\rm inc}$" admits a
retraction, the sequence splits and then ${\rm Ker}d_X^{-n_0+1}$ is
a direct summand of $X^{-n_0}$. Recall that the subcategory
$\mathcal{X}\subseteq \mathcal{A}$ is closed under taking direct
summands. Therefore the object ${\rm Ker}d_X^{-n_0+1}$ belongs to
$\mathcal{X}$.

\vskip 3pt

 The implication ``$(3) \Rightarrow (1)$" is easy, since
the subcomplex $0 \rightarrow {\rm Ker}d_X^{-n_0+1} \rightarrow
X^{-n_0+1} \rightarrow \cdots \rightarrow X^0 \rightarrow
M\rightarrow 0$ is the required $\mathcal{X}$-resolution.
\end{proof}

We have the following consequence.

\begin{cor}\label{cor:finitedimension}
Let $(\mathcal{X}, \mathcal{Y})$ be an admissible balanced pair in
an abelian category $\mathcal{A}$. Then we have
$\mathcal{X}\mbox{-res.dim}\;
\mathcal{A}=\mathcal{Y}\mbox{-cores.dim}\; \mathcal{A}$.
\end{cor}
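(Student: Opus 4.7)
The plan is to prove the equality by showing that, for every integer $n_0 \geq 0$, the two inequalities $\mathcal{X}\mbox{-res.dim}\; \mathcal{A}\leq n_0$ and $\mathcal{Y}\mbox{-cores.dim}\; \mathcal{A}\leq n_0$ are equivalent. Since this equivalence holds for all $n_0$, the two global dimensions must coincide (with $\infty = \infty$ if no finite $n_0$ works on either side). Before starting, I would invoke Corollary \ref{cor:admissible} to note that the admissibility hypothesis simultaneously gives admissibility of $\mathcal{X}$ and coadmissibility of $\mathcal{Y}$, so that Lemma \ref{homologydimension} and its evident dual are both available.

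The core of the argument is a chain of equivalences. By definition, $\mathcal{X}\mbox{-res.dim}\; \mathcal{A}\leq n_0$ amounts to $\mathcal{X}\mbox{-res.dim}\; M \leq n_0$ for every $M \in \mathcal{A}$. Applying the equivalence (1)$\Leftrightarrow$(2) in Lemma \ref{homologydimension}, this is in turn equivalent to: for every $M, N \in \mathcal{A}$ and every $\mathcal{X}$-resolution $X^\bullet \rightarrow M$, one has $H^n({\rm Hom}_\mathcal{A}(X^\bullet, N)) = 0$ for all $n > n_0$. Then Lemma \ref{lem:balanced} provides, for any chosen $\mathcal{Y}$-coresolution $N \rightarrow Y^\bullet$, a natural isomorphism $H^n({\rm Hom}_\mathcal{A}(X^\bullet, N)) \simeq H^n({\rm Hom}_\mathcal{A}(M, Y^\bullet))$, so the vanishing condition transfers verbatim to the right-hand side. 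Finally, applying the dual of Lemma \ref{homologydimension} reinterprets this vanishing as $\mathcal{Y}\mbox{-cores.dim}\; N \leq n_0$ for every $N$, i.e.\ as $\mathcal{Y}\mbox{-cores.dim}\; \mathcal{A}\leq n_0$.

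There is no real obstacle beyond carefully lining up quantifiers and checking that each lemma applies: Lemma \ref{homologydimension} requires admissibility (granted by hypothesis), and Lemma \ref{lem:balanced} requires the balanced pair structure. One minor point I would make explicit is that the naturality in Lemma \ref{lem:balanced} is irrelevant here; only the existence of the isomorphism for each fixed choice of resolutions matters. The extension to infinite dimensions is automatic from the quantifier form of the equivalence.
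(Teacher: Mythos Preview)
Your proposal is correct and follows essentially the same approach as the paper's proof: both combine Lemma~\ref{lem:balanced} with the characterization in Lemma~\ref{homologydimension}(2) and its dual to transfer the vanishing condition across the balanced pair. You have simply spelled out the quantifier bookkeeping and the appeal to Corollary~\ref{cor:admissible} that the paper leaves implicit.
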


 \begin{proof}  We apply Lemma \ref{lem:balanced}. Then the result
 follows directly from Lemma \ref{homologydimension}(2) and its dual for coadmissible covariantly
 finite subcategories.
 \end{proof}

In what follows we introduce the notion of cotorsion triple, which
gives rise naturally to a balanced pair. The notion was suggested by
Edgar Enochs in a private communication.

\vskip 5pt

 Let $\mathcal{A}$ be an
abelian category. For a subcategory $\mathcal{X}$ of $\mathcal{A}$,
set $\mathcal{X}^\perp=\{M \in \mathcal{A} \; |\; {\rm
Ext}_\mathcal{A}^1(X, M)=0 \mbox{ for all } X \in \mathcal{X}\}$ and
$^\perp \mathcal{X} = \{M \in \mathcal{A} \; |\; {\rm
Ext}_\mathcal{A}^1(M, X)=0 \mbox{ for all } X \in \mathcal{X}\}$. A
pair  $(\mathcal{X}, \mathcal{Y})$ of subcategories in $\mathcal{A}$
is called a \emph{cotorsion pair} provided that $\mathcal{X}={^\perp
\mathcal{Y}}$ and $\mathcal{Y}=\mathcal{X}^\perp$. The cotorsion
pair $(\mathcal{X}, \mathcal{Y})$ is said to be \emph{complete}
provided that for each $M\in \mathcal{A}$ there exist short exact
sequences $0\rightarrow Y \rightarrow X \rightarrow M \rightarrow 0$
and $0\rightarrow M \rightarrow Y' \rightarrow X'\rightarrow 0$ with
$X, X'\in \mathcal{X}$ and $Y, Y'\in \mathcal{Y}$ (\cite[Chapter
7]{EJ} and \cite{Hov,EEG}).

\vskip 5pt

Assume that $\mathcal{A}$ has enough projective and injective
objects. Recall that a subcategory $\mathcal{X}$ of $\mathcal{A}$ is
\emph{resolving} provided that it contains all projective objects
such that for any short exact sequence $0\rightarrow X' \rightarrow
X \rightarrow X''\rightarrow 0$ with $X''\in \mathcal{X}$ in
$\mathcal{A}$, $X\in \mathcal{X}$ if and only if  $X'\in
\mathcal{X}$. Dually one has the notion of \emph{coresolving
subcategory}. A cotorsion pair $(\mathcal{X}, \mathcal{Y})$ is said
to be \emph{hereditary} provided that $\mathcal{X}$ is resolving. It
is not hard to see that this is equivalent to that the subcategory
$\mathcal{Y}$ is coresolving (\cite[Theorem 3.4]{EJTX}).

\vskip 5pt

 A triple $(\mathcal{X}, \mathcal{Z}, \mathcal{Y})$ of
subcategories in $\mathcal{A}$ is called a \emph{cotorsion triple}
provided that both $(\mathcal{X}, \mathcal{Z})$ and $(\mathcal{Z},
\mathcal{Y})$ are cotorsion pairs; it is \emph{complete} (resp.
\emph{hereditary}) provided that both of the two cotorsion pairs are
complete (resp. hereditary).

\vskip 5pt

 The following result is essentially due to Enochs, Jenda,
Torrecillas and Xu (\cite[Theorem 4.1]{EJTX}). The argument
resembles the one in \cite[Theorem 12.1.4]{EJ}. For completeness we
include a proof.

\begin{prop}\label{prop:blanced}
Let $\mathcal{A}$ be an abelian category with enough projective and
injective objects. Assume that $(\mathcal{X}, \mathcal{Z},
\mathcal{Y})$ is  cotorsion triple which is complete and hereditary.
Then the pair $(\mathcal{X}, \mathcal{Y})$ is an admissible balanced
pair.
\end{prop}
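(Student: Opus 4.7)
I verify (BP0), (BP1), (BP2) and admissibility under the hypothesis that $(\mathcal{X},\mathcal{Z},\mathcal{Y})$ is a complete hereditary cotorsion triple in $\mathcal{A}$; note that $\mathcal{Z}$ is simultaneously resolving and coresolving (since it is the right half of one and the left half of the other hereditary pair), and by heredity $\mathrm{Ext}^{\geq 1}(\mathcal{X},\mathcal{Z})=0=\mathrm{Ext}^{\geq 1}(\mathcal{Z},\mathcal{Y})$.

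For (BP0) and admissibility, the completeness of $(\mathcal{X},\mathcal{Z})$ gives every $M\in\mathcal{A}$ an exact sequence $0\to Z\to X\to M\to 0$ with $X\in\mathcal{X}$ and $Z\in\mathcal{Z}$; since $\mathrm{Ext}^1(X',Z)=0$ for any $X'\in\mathcal{X}$, this $X\to M$ is a right $\mathcal{X}$-approximation, and it is epic, so $\mathcal{X}$ is contravariantly finite and admissible. Iterating produces an honest exact $\mathcal{X}$-resolution $\cdots\to X^{-1}\to X^0\to M\to 0$ whose successive kernels $Z_n$ all lie in $\mathcal{Z}$. Everything is dual for $\mathcal{Y}$ via $(\mathcal{Z},\mathcal{Y})$, producing also a $\mathcal{Y}$-coresolution $M\to Y^\bullet$ with cokernels $W^n\in\mathcal{Z}$.

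For (BP1), fix $Y\in\mathcal{Y}$. Applying $\mathrm{Hom}(-,Y)$ to each short exact sequence $0\to Z_n\to X^{-n}\to Z_{n-1}\to 0$ for $n\geq 1$ and using the vanishing $\mathrm{Ext}^1(Z_{n-1},Y)=0$ produces short exact sequences
\[
0\to\mathrm{Hom}(Z_{n-1},Y)\to\mathrm{Hom}(X^{-n},Y)\to\mathrm{Hom}(Z_n,Y)\to 0,
\]
and splicing these shows that $\mathrm{Hom}(X^\bullet\to M,Y)$ is exact at every position except possibly at $\mathrm{Hom}(X^{-1},Y)$; there the cohomology equals the image of the connecting map $\delta\colon\mathrm{Hom}(Z_0,Y)\to\mathrm{Ext}^1(M,Y)$ coming from $0\to Z_0\to X^0\to M\to 0$. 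To prove $\delta=0$, embed $Z_0$ into an injective object $I$ of $\mathcal{A}$ (possible by enough injectives), giving $0\to Z_0\to I\to W\to 0$; because $\mathcal{Z}$ is coresolving and contains $I$, we get $W\in\mathcal{Z}$, whence $\mathrm{Ext}^1(W,Y)=0$, so every $g\colon Z_0\to Y$ extends to some $\tilde g\colon I\to Y$. The pushout of $[0\to Z_0\to X^0\to M\to 0]$ along $Z_0\hookrightarrow I$ is an extension of $M$ by the injective $I$, hence lies in $\mathrm{Ext}^1(M,I)=0$ and therefore splits; pushing further along $\tilde g$ then gives $\delta(g)=\tilde g_\ast 0=0$, so (BP1) holds.

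The proof of (BP2) is entirely dual: replace the injective $I$ of $Z_0$ by a projective $P$ surjecting onto $W^0$ via $0\to K\to P\to W^0\to 0$. Since $\mathcal{Z}$ is resolving and $P,W^0\in\mathcal{Z}$, one has $K\in\mathcal{Z}$, and hence $\mathrm{Ext}^1(X,K)=0$ for every $X\in\mathcal{X}$; any $h\colon X\to W^0$ then lifts through $P$ because the pullback of this sequence along $h$ splits. Pulling back $[0\to N\to Y^0\to W^0\to 0]$ along $h$ therefore factors through $\mathrm{Ext}^1(P,N)=0$, killing the dual boundary connecting map. The crux is thus the vanishing of these two boundary maps, established by exploiting the enough-injectives and enough-projectives hypothesis together with the fact that $\mathcal{Z}$ is both resolving and coresolving and absorbs the auxiliary kernel $W$ (resp.\ cokernel $K$).
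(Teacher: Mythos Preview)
Your proof is correct and follows essentially the same approach as the paper's. Both arguments establish (BP0) and admissibility via the special approximations coming from completeness, and for (BP1) both use the same key trick: embed $Z_0$ into an injective $I$, invoke coresolving-ness of $\mathcal{Z}$ to get $I/Z_0\in\mathcal{Z}$, and conclude from $\mathrm{Ext}^1(I/Z_0,Y)=0$ that every map $Z_0\to Y$ extends over $I$. The only cosmetic difference is that the paper phrases the final step as a direct surjectivity of $\mathrm{Hom}(X^0,Y)\to\mathrm{Hom}(Z_0,Y)$ via the lift $X^0\to I$, while you phrase it as the vanishing of the connecting map $\delta$ via the factorization of the pushout through $\mathrm{Ext}^1(M,I)=0$; these are the same argument in different clothing.
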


\begin{proof} Let $M\in \mathcal{A}$. Since $(\mathcal{X}, \mathcal{Z})$ is
complete, we have a short exact sequence $\xi\colon 0\rightarrow Z
\rightarrow X \stackrel{f}\rightarrow M \rightarrow 0$ with $X\in
\mathcal{X}$ and $Z\in \mathcal{Z}$. Since $Z\in \mathcal{X}^\perp$,
the sequence $\xi$ is a \emph{special right
$\mathcal{X}$-approximation} (\cite[Definition 7.1.6]{EJ}). In
particular, we have that $f$ is a right $\mathcal{X}$-approximation,
and then $\mathcal{X}$ is contravariantly finite. Dually we have
that $\mathcal{Y}$ is covariantly finite. Then we get (BP0).

\vskip 3pt

 Observe that the subcategory $\mathcal{X}$ contains all the
projective objects. Then right $\mathcal{X}$-approximations are
epic, that is, the contravariantly finite subcategory
$\mathcal{X}\subseteq \mathcal{A}$ is admissible. Dually the
subcategory $\mathcal{Y}$ is coadmissible.

\vskip 3pt

 To show (BP1), let $X^\bullet \stackrel{\varepsilon}\rightarrow M$ be an $\mathcal{X}$-resolution of
 an object $M$. Since $\mathcal{X}$ is admissible, the sequence
 $X^\bullet \stackrel{\varepsilon}\rightarrow M$ is acyclic.
 Since $(\mathcal{X}, \mathcal{Z})$ is complete, we may assume
 that all the cocycles of $X^\bullet$ (but $M$) lie in $\mathcal{Z}$. Then
 (BP1) follows immediately from the following fact:
 for a short exact sequence $\gamma\colon 0 \rightarrow Z^0 \rightarrow X^0
\rightarrow M \rightarrow 0$ with $Z^0\in \mathcal{Z}$ and $X^0\in
\mathcal{X}$ and an object $Y\in \mathcal{Y}$, the functor ${\rm
Hom}_\mathcal{A}(-, Y)$ keeps $\gamma$ exact. The fact is equivalent
to that the induced map ${\rm Hom}_\mathcal{A}(X^0, Y) \rightarrow
{\rm Hom}_\mathcal{A}(Z^0, Y)$ is surjective. To see this, take a
short exact sequence $0\rightarrow Z_0\rightarrow I \rightarrow
Z'\rightarrow 0$ with $I$ injective. Since
$(\mathcal{X},\mathcal{Z})$ is hereditary, $\mathcal{Z}$ is
coresolving. Note that $Z_0, I\in \mathcal{Z}$, and then we have
$Z'\in \mathcal{Z}$. Observe the following commutative diagram
\[\xymatrix{
0 \ar[r] & Z^0 \ar[r] \ar@{=}[d] & X^0 \ar[r] \ar@{.>}[d] & M \ar[r]
\ar@{.>}[d] & 0\\
0\ar[r] &  Z^0 \ar[r]  & I \ar[r]  &Z' \ar[r] &0. }\]
 Since ${\rm
Ext}_\mathcal{A}^1(Z', Y)=0$, we deduce that the induced map ${\rm
Hom}_\mathcal{A}(I, Y) \rightarrow {\rm Hom}_\mathcal{A}(Z^0, Y)$ is
surjective. Note that from the commutative diagram above we infer
that the map ${\rm Hom}_\mathcal{A}(I, Y) \rightarrow {\rm
Hom}_\mathcal{A}(Z^0, Y)$ factors as ${\rm Hom}_\mathcal{A}(I, Y)
\rightarrow {\rm Hom}_\mathcal{A}(X^0, Y)\rightarrow {\rm
Hom}_\mathcal{A}(Z^0, Y)$. Therefore the map ${\rm
Hom}_\mathcal{A}(X^0, Y) \rightarrow {\rm Hom}_\mathcal{A}(Z^0, Y)$
is surjective. Dually we have (BP2).
\end{proof}

\section{Relative Derived Category}
In this section we make preparations to prove Theorem A. We
introduce the notion of relative derived category and study its
relation with homotopy categories.

\vskip 5pt

 Let $\mathcal{A}$ be an abelian category, and let $\mathcal{X}\subseteq
\mathcal{A}$ be a contravariantly finite subcategory. Recall that
the homotopy category $\mathbf{K}(\mathcal{A})$  has a canonical
triangulated structure. Denoted  by $\mathcal{X}\mbox{-ac}$ the full
triangulated subcategory of $\mathbf{K}(\mathcal{A})$ consisting of
right $\mathcal{X}$-acyclic complexes. A chain map $f^\bullet\colon
M^\bullet \rightarrow N^\bullet$ is said to be a \emph{right
$\mathcal{X}$-quasi-isomorphism} provided that for each $X\in
\mathcal{X}$, the resulting chain map ${\rm Hom}_\mathcal{A}(X,
f^\bullet)\colon  {\rm Hom}_\mathcal{A}(X, M^\bullet)\rightarrow
{\rm Hom}_\mathcal{A}(X, N^\bullet)$ is a quasi-isomorphism. Denote
by $\Sigma_\mathcal{X}$ the class of all the right
$\mathcal{X}$-quasi-isomorphisms in $\mathbf{K}(\mathcal{A})$. Note
that the class $\Sigma_\mathcal{X}$ is a saturated multiplicative
system corresponding to the subcategory $\mathcal{X}\mbox{-ac}$ in
the sense that a chain map $f^\bullet\colon M^\bullet \rightarrow
N^\bullet$ is a right $\mathcal{X}$-quasi-isomorphism if and only if
its mapping cone ${\rm Cone}(f^\bullet)$ is right
$\mathcal{X}$-acyclic (for the correspondence, consult \cite[Chapter
V, Theorem 1.10.2]{GM1}).

\begin{defn}\label{defn:relativedc}
The \emph{relative derived category}
$\mathbf{D}_\mathcal{X}(\mathcal{A})$ of $\mathcal{A}$ with respect
to $\mathcal{X}$ is defined to be the Verdier quotient (\cite{V1}
and \cite[Chapter 2]{Ne}) of $\mathbf{K}(\mathcal{A})$ modulo the
subcategory $\mathcal{X}\mbox{-ac}$, that is,
$$\mathbf{D}_\mathcal{X}(\mathcal{A}):= \mathbf{K}(\mathcal{A})/{\mathcal{X}\mbox{-ac}}
=\Sigma_\mathcal{X}^{-1}\mathbf{K}(\mathcal{A}).$$ We denote by $Q:
\mathbf{K}(\mathcal{A}) \rightarrow \mathbf{D}_\mathcal{X}(A)$ the
quotient functor.
\end{defn}

\vskip 5pt

\begin{rem}
Denote by $\mathcal{E}_\mathcal{X}$ the class of short exact
sequences in $\mathcal{A}$ on which the functors ${\rm
Hom}_\mathcal{A}(X, -)$ are exact for all $X\in \mathcal{X}$. Then
$(\mathcal{A}, \mathcal{E}_\mathcal{X})$ is an exact category in the
sense of Quillen (\cite[Appendix A]{Ke3}). Observe that if the
subcategory $\mathcal{X}$ is admissible, then the relative derived
category $\mathbf{D}_\mathcal{X}(\mathcal{A})$ coincides with
Neeman's derived category of the exact category $(\mathcal{A},
\mathcal{E}_\mathcal{X})$ (\cite[Construction 1.5]{Ne1}; also see
\cite[Sections 11 and 12]{Ke2}). Note that Buan considers relative
derived categories in quite a different setup (\cite[Section
2]{Bu}), and Gorenstein derived categories in the sense of Gao and
Zhang are examples of  relative derived categories (\cite{GZ}).
\hfill $\square$
\end{rem}

 In what follows we will study for a complex $M^\bullet$ its \emph{$\mathcal{X}$-resolution},
 that is, a right $\mathcal{X}$-quasi-isomorphism $X^\bullet \rightarrow
M^\bullet$ with each $X^i$ lying in $\mathcal{X}$. From now on,
$\mathcal{X}\subseteq \mathcal{A}$ is  a contravariantly finite
subcategory such that $\mathcal{X}\mbox{-res.dim}\; \mathcal{A} <
\infty$. Let $M^\bullet=(M^n, d_M^n)_{n\in \mathbb{Z}}$ be a complex
in $\mathcal{A}$. For each $M^n$, take a finite
$\mathcal{X}$-resolution $X^{n,
\bullet}\stackrel{\varepsilon^n}\rightarrow M^n$, where
$X^{n,\bullet} = (X^{n,-i}, d_0^{n,-i})_{i\geq 0}$. By a version of
Comparison Theorem, there exists a chain map $d_v^{n,\bullet}\colon
X^{n, \bullet} \rightarrow X^{n+1, \bullet}$ extending the map
$d_M^n\colon M^n \rightarrow M^{n+1}$. Set $d_{1}^{i,
j}=(-1)^{j}d_v^{i,j}$ for all $i, j\in \mathbb{Z}$.

\vskip 5pt

 The following argument resembles the one in \cite[Proposition 2.6]{Ric}, while it differs
 from the proof  of  \cite[Chapter XVII, Proposition 1.2]{CE}. It
 seems that the argument in \cite{CE} does not extend to our situation.

\vskip 5pt

 Consider the bigraded objects  $X^{\bullet, \bullet}$. Note that $X^{i,j}\neq
 0$ only if $-(\mathcal{X}\mbox{-res.dim}\; \mathcal{A}) \leq j\leq 0$.
 The bigraded objects $X^{\bullet, \bullet}$ are endowed with two
 endomorphisms $d_0$ and $d_1$ of degree $(0,1)$ and $(1, 0)$,
 respectively, subject to the relations $d_0\circ d_0=0$ and $d_0\circ d_1+d_1\circ d_0=0$.
 Unfortunately, $d_1\circ d_1$ is not necessarily zero.

 \vskip 3pt

 Consider the chain map $d_1^{n+1, \bullet}\circ d_1^{n, \bullet}\colon X^{n, \bullet}
 \rightarrow X^{n+2, \bullet}$, which extends the map $0=d_M^{n+1}\circ d_M^n\colon M^{n}
 \rightarrow M^{n+2}$. By a version of Comparison Theorem,
 we infer that the chain map $d_1^{n+1, \bullet}\circ d_1^{n,
 \bullet}$ is homotopic to zero. Thus the homotopy maps give rise to an endomorphism
 $d_2$ on $X^{\bullet, \bullet}$ of degree $(2, -1)$, such that $d_0\circ d_2 + d_1\circ d_1 + d_2\circ
 d_0=0$. It is a pleasant exercise to check that $d_1\circ d_2+ d_2\circ
 d_1$ commutes with $d_0$, in other words,
 $$d_1^{n+2, \bullet-1}\circ d_2^{n,\bullet}+
 d_2^{n+1, \bullet}\circ d_1^{n, \bullet}\colon X^{n, \bullet}\longrightarrow X^{n+3,
 \bullet}(-1)$$
 is a chain map, where $(-1)$ denotes the inverse of the degree-shift functor on
 complexes (see Section 1 for the notation).

\vskip 5pt

 We need the following easy lemma whose proof is routine.

 \begin{lem}
 Let $M_1, M_2$ be two objects with $\mathcal{X}$-resolutions
 $X_1^\bullet \rightarrow M_1$ and $X_2^\bullet \rightarrow
 M_2$. Let $r\geq 1$. Then any chain map $f^\bullet \colon X_1^\bullet \rightarrow
 X_2^\bullet(-r)$ is homotopic to zero. \hfill $\square$
 \end{lem}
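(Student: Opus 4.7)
The plan is to build a null-homotopy $h^n\colon X_1^n \to X_2^{n-r-1}$ satisfying the relation $f^n = d_{X_2}^{n-r-1}\circ h^n + h^{n+1}\circ d_{X_1}^n$ by downward induction on $n$. Since $X_1^n = 0$ for every $n > 0$, we may simply set $h^n = 0$ in those degrees; this gives the base of the induction and in particular forces $h^1 = 0$.

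At $n = 0$, the relation to be verified becomes $f^0 = d_{X_2}^{-r-1}\circ h^0$. The chain-map identity gives $d_{X_2}^{-r}\circ f^0 = f^1\circ d_{X_1}^0 = 0$, so $f^0$ factors through the inclusion ${\rm Ker}(d_{X_2}^{-r})\hookrightarrow X_2^{-r}$. Because $r\geq 1$, the map $X_2^{-r-1}\to {\rm Ker}(d_{X_2}^{-r})$ is one of the right $\mathcal{X}$-approximations defining the $\mathcal{X}$-resolution $X_2^\bullet\to M_2$, and since $X_1^0\in\mathcal{X}$ the factorization of $f^0$ lifts through it to produce $h^0$. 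For the inductive step at $n\leq -1$, set $g^n = f^n - h^{n+1}\circ d_{X_1}^n\colon X_1^n\to X_2^{n-r}$. A direct calculation combining the chain-map identity $d_{X_2}^{n-r}\circ f^n = f^{n+1}\circ d_{X_1}^n$, the inductive homotopy relation for $h^{n+1}$, and $d_{X_1}^{n+1}\circ d_{X_1}^n = 0$ yields $d_{X_2}^{n-r}\circ g^n = 0$. Thus $g^n$ factors through ${\rm Ker}(d_{X_2}^{n-r})$, and lifting that factorization through the right $\mathcal{X}$-approximation $X_2^{n-r-1}\to {\rm Ker}(d_{X_2}^{n-r})$ (which is a bona fide resolution map, since $n-r-1\leq -r-2$ lies well inside the resolution) produces the required $h^n$.

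The hypothesis $r\geq 1$ is used precisely to ensure that every lift takes place strictly inside the resolution $X_2^\bullet$, so that the morphism $X_2^{n-r-1}\to {\rm Ker}(d_{X_2}^{n-r})$ is a right $\mathcal{X}$-approximation onto an honest cocycle, never reaching the augmentation $X_2^0\to M_2$ (whose epicness would require admissibility). The main obstacle is therefore merely careful index bookkeeping together with the cocycle verification $d_{X_2}^{n-r}\circ g^n = 0$ at each inductive step; the degree shift $(-r)$ introduces no signs, so the argument otherwise reduces to the standard Comparison Theorem construction.
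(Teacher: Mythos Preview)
Your proof is correct and is precisely the routine inductive construction the paper has in mind; the paper itself omits the argument entirely, marking it as routine. One minor quibble with your closing paragraph: the augmentation $X_2^0\to M_2$ is already a right $\mathcal{X}$-approximation by the definition of an $\mathcal{X}$-resolution, so admissibility is not the issue. The genuine obstruction at $r=0$ is that in the \emph{deleted} complex one has $\mathrm{Ker}(d_{X_2}^{0})=X_2^0$, and the differential $d_{X_2}^{-1}$ does not surject onto this (its image is only $\mathrm{Ker}\,\varepsilon$), so the first lift would fail. Your proof itself handles the range $r\geq 1$ correctly.
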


\vskip 5pt

By the lemma above we deduce that the chain map $d_1^{n+2,
\bullet-1}\circ d_2^{n,\bullet}+
 d_2^{n+1, \bullet}\circ d_1^{n, \bullet}$ is homotopic to zero.
 Note that the homotopy maps give rise to an endomorphism $d_3$ of degree $(3,
-2)$ such
 that
 $$d_0\circ d_3 + d_1\circ d_2 + d_2\circ d_1+d_3\circ d_0=0.$$

Iterating this process of finding homotopy maps, we  construct for
each $l\geq 0$, an endomorphism $d_l$ on $X^{\bullet, \bullet}$ of
degree $(l, -l+1)$ such that $\sum_{l=0}^n d_l\circ d_{n-l}=0$
(consult the proof of \cite[Proposition 2.6]{Ric}). We will refer to
the bigraded objects $X^{\bullet,\bullet}$ together with such
endomorphisms $d_l$ as a \emph{quasi-bicomplex} in $\mathcal{A}$.

\vskip 5pt

 The ``total complex" $T^\bullet={\rm tot}(X^{\bullet,
\bullet})$  of the quasi-bicomplex $X^{\bullet, \bullet}$ is defined
as follows: $T^n:=\bigoplus_{i+j=n}X^{i, j}$ (note that this is a
finite coproduct), and the differential $d_T^n\colon T^n \rightarrow
T^{n+1}$ is defined to be $\sum_{l\geq 0}d_l$ (again this is a
finite coproduct), that is, the restriction of $d_T^n$ on $X^{i, j}$
is given by $\sum_{l\geq 0} d_l^{i, j}$. Then we infer from above
that $d_T^{n+1}\circ d_T^{n}=0$. There is a natural chain map
$\varepsilon^\bullet\colon T^\bullet \rightarrow M^\bullet$ such
that its restriction on $X^{n, 0}$ is $\varepsilon^n$ for each $n$,
and zero elsewhere.

\vskip 5pt

We have the following key observation.

\begin{prop}\label{keyobservation}
The chain map $\varepsilon^\bullet \colon T^\bullet \rightarrow
M^\bullet$ is a right $\mathcal{X}$-quasi-isomorphism; moreover, it
is a right $\mathbf{C}(\mathcal{X})$-approximation of $M^\bullet$ in
the category $\mathbf{C}(\mathcal{A})$ of complexes.
\end{prop}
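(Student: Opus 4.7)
The plan is to prove the two assertions of the proposition separately.

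\emph{Quasi-isomorphism.} Extend the quasi-bicomplex $X^{\bullet,\bullet}$ to $\tilde{X}^{\bullet,\bullet}$ by adjoining a top row $\tilde{X}^{i,1}:=M^i$, with $\tilde{d}_0^{i,0}:=\varepsilon^i$, $\tilde{d}_1^{i,1}:=-d_M^i$, and all higher $\tilde{d}_l^{i,1}:=0$. The new quasi-bicomplex relations follow routinely from the identity $\varepsilon^{i+1}\circ d_1^{i,0}=d_M^i\circ\varepsilon^i$ (i.e.\ $d_v^{i,\bullet}$ extends $d_M^i$) together with the original relations on $X^{\bullet,\bullet}$; up to a degree shift and sign, ${\rm tot}(\tilde{X}^{\bullet,\bullet})$ coincides with ${\rm Cone}(\varepsilon^\bullet)$. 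For any $X\in \mathcal{X}$, the $p$-th column $({\rm Hom}_\mathcal{A}(X,\tilde{X}^{p,\bullet}),\tilde{d}_0)$ is ${\rm Hom}_\mathcal{A}(X,-)$ applied to the full $\mathcal{X}$-resolution $X^{p,\bullet}\stackrel{\varepsilon^p}\rightarrow M^p$, hence acyclic. On the total complex ${\rm tot}({\rm Hom}_\mathcal{A}(X,\tilde{X}^{\bullet,\bullet}))$ take the decreasing column filtration $F^p:=\bigoplus_{i\geq p}{\rm Hom}_\mathcal{A}(X,\tilde{X}^{i,\bullet})$; since every $d_l$ has bidegree $(l,-l+1)$ with $l\geq 0$ this is a subcomplex filtration, and since $j$ ranges over the finite interval $[-n_0,1]$ where $n_0:=\mathcal{X}\mbox{-res.dim}\;\mathcal{A}<\infty$, it is bounded at each total degree. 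The associated spectral sequence therefore converges strongly, its $E_1$-page vanishes column-by-column, so ${\rm Hom}_\mathcal{A}(X,{\rm Cone}(\varepsilon^\bullet))$ is acyclic, proving that $\varepsilon^\bullet$ is a right $\mathcal{X}$-quasi-isomorphism.

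\emph{Approximation property.} Let $g^\bullet\colon Y^\bullet\to M^\bullet$ be a chain map with each $Y^n\in\mathcal{X}$. Write the components of a prospective lift as $h_l^n\colon Y^n\to X^{n+l,-l}$, so that the chain map condition $d_T^n\circ h^n=h^{n+1}\circ d_Y^n$, projected onto the $X^{n+k+1,-k}$-factor of $T^{n+1}$, takes the form
$$d_0^{n+k+1,-k-1}\circ h_{k+1}^n = h_k^{n+1}\circ d_Y^n-\sum_{m=0}^{k} d_{k+1-m}^{n+m,-m}\circ h_m^n =: \phi_k^n.$$
The family $\{h_l^n\}$ is built by induction on $l$ (simultaneously in $n$). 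For $l=0$, use that $\varepsilon^n$ is a right $\mathcal{X}$-approximation to factor $g^n$ as $\varepsilon^n\circ h_0^n$. For the inductive step, the right $\mathcal{X}$-acyclicity of the resolution $X^{n+k+1,\bullet}\to M^{n+k+1}$ together with $Y^n\in\mathcal{X}$ makes ${\rm Hom}_\mathcal{A}(Y^n,\tilde{X}^{n+k+1,\bullet})$ acyclic; hence $\phi_k^n$ lifts to the desired $h_{k+1}^n$ as soon as it is a cocycle there. This cocycle condition amounts to $\varepsilon^{n+1}\circ\phi_0^n=0$ at the base case $k=0$ (which is immediate from the chain map relation $d_M^n\circ g^n=g^{n+1}\circ d_Y^n$ combined with $\varepsilon^n\circ h_0^n=g^n$) and to $d_0^{n+k+1,-k}\circ\phi_k^n=0$ for $k\geq 1$, which follows formally from the quasi-bicomplex relations $\sum_l d_l\circ d_{N-l}=0$, from $d_Y^{n+1}\circ d_Y^n=0$, and from the identities $d_0^{n'+m,-m}\circ h_m^{n'}=\phi_{m-1}^{n'}$ already established for $m\leq k$.

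\emph{Main obstacle.} The delicate point is the formal bookkeeping of the cocycle identity $d_0\circ\phi_k^n=0$ at each inductive step of the second part; once that is in hand the whole construction proceeds mechanically. The finiteness assumption $\mathcal{X}\mbox{-res.dim}\;\mathcal{A}<\infty$ enters decisively in the first part (to make the column filtration bounded at each total degree so that the spectral sequence converges) and in the second (to ensure each $h^n$ has only finitely many nonzero components, so the sum defining $\phi_k^n$ is finite).
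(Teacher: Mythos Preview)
Your proposal is correct and follows essentially the same approach as the paper: both extend the quasi-bicomplex by a top row $M^\bullet$ and run the column-filtration spectral sequence for the first part, and both build the lift $h_l^n$ (the paper's $f_l^n$) inductively via the same recurrence $d_0\circ h_{l}=h_{l-1}\circ d_Y-\sum_{i\geq 1}d_i\circ h_{l-i}$ for the second. The only difference is that the paper writes out the ``formal bookkeeping'' computation $d_0\circ\phi_k^n=0$ in full (it is the displayed chain of equalities in their proof), whereas you correctly identify the ingredients but leave the verification implicit.
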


\begin{proof} First we introduce a new quasi-bicomplex
$(C^{\bullet,\bullet},d_l)$ as follows: $C^{i,j}=X^{i, j}$, $j\leq
0$ and  $C^{i,1}=M^i$, and zero elsewhere; the endomorphisms $d_l$
on $C^{i,j}$ are the same as the ones on $X^{\bullet, \bullet}$ for
$j\geq 1$ or $j=0$ and $l\geq 1$; $d_0^{i,0}=\varepsilon^i$, and
$d_l$ vanishes on $C^{i, 1}$ for all $l\neq 1$, and
$d_1^{i,1}=-d_M^i$. One checks that $C^{\bullet,\bullet}$ is a
quasi-bicomplex; moreover, it is easy to see that the ``total
complex" ${\rm tot}(C^{\bullet, \bullet})$ of $C^{\bullet,\bullet}$
is the mapping cone of the chain map $\varepsilon^\bullet\colon
T^\bullet \rightarrow M^\bullet$ shifted by minus one. Then for the
first statement, it suffices to show that the complex ${\rm
tot}(C^{\bullet,\bullet})$ is right $\mathcal{X}$-acyclic.

\vskip 3pt

 Assume that $X\in \mathcal{X}$. Consider the complex $K^\bullet={\rm Hom}_\mathcal{A}(X, {\rm
tot}(C^{\bullet,\bullet}))$ of abelian groups. Observe that the
complex $K^\bullet$ is the ``total complex" of the quasi-bicomplex
${\rm Hom}_\mathcal{A}(X, C^{\bullet,\bullet})$ of abelian groups.
As in the case of bicomplexes, we have a descending filtration of
subcomplexes $\{F^pK^\bullet, \; p\in \mathbb{Z}\}$ of the ``total
complex" $K^\bullet$ given by $F^pK^n:=\bigoplus_{i\geq p,\;
i+j=n}{\rm Hom}_\mathcal{A}(X, C^{i, j})$. This filtration gives
rise to a convergent spectral sequence $E_2^{p, q}
\underset{p}\Longrightarrow H^{p+q}(K^\bullet)$. Since $X^{n,
\bullet}\stackrel{\varepsilon^n}\rightarrow M^n$ is an
$\mathcal{X}$-resolution, the complex ${\rm Hom}_\mathcal{A}(X,
C^{n, \bullet})$ is acyclic for each  $n$. Therefore the spectral
sequence vanishes on $E_2$ (and even on $E_1$), and then we deduce
that $H^n(K^\bullet)=0$ for each $n$. We are done with the first
statement.

\vskip 3pt

For the second statement, let $f^\bullet\colon X^\bullet \rightarrow
M^\bullet$ be a chain map with $X^\bullet=(X^n, d_X^n)_{n\in
\mathbb{Z}} \in \mathbf{C}(\mathcal{X})$. Note that the morphism
$\varepsilon^n\colon X^{n, 0}\rightarrow M^n$ is a right
$\mathcal{X}$-approximation, hence the map $f^n$ factors through it.
Take $f_0^n\colon X^n\rightarrow X^{n, 0}$ such that
$\varepsilon^n\circ f_0^n=f^n$. Consider the map $d_1^{n,0}\circ
f_0^n-f_0^{n+1}\circ d_X^n\colon X^{n} \rightarrow X^{n+1,0}$. Note
that
\begin{align*}
&\varepsilon^{n+1}\circ (d_1^{n,0}\circ f_0^n-f_0^{n+1}\circ d_X^n)\\
&\;  = d_M^n \circ \varepsilon^n \circ f_0^n -
\varepsilon^{n+1}\circ
f_0^{n+1}\circ d_X^n\\
&\;  = d_M^n\circ f^n- f^{n+1}\circ  d_X^n=0.
\end{align*}
Therefore the map $d_1^{n,0}\circ f_0^n-f_0^{n+1}\circ d_X^n$
factors through ${\rm Ker}\varepsilon^{n+1}$. Note that $X^{n+1,-1}
\stackrel{d_0^{n+1,-1}} \rightarrow {\rm Ker}\varepsilon^{n+1}$ is a
right $\mathcal{X}$-approximation. Then we have a factorization
\begin{align} \label{eqnf1}
d_1^{n,0}\circ
f_0^n-f_0^{n+1}\circ d_X^n=-d_0^{n+1,-1}\circ f_1^n,
\end{align}
where $f_1^n\colon X^n \rightarrow X^{n+1, -1}$ is some morphism.

\vskip 3pt

Rewrite  equation (\ref{eqnf1}) as $d_0\circ f_1+d_1\circ
f_0=f_0\circ d_X$.  We will refer to (\ref{eqnf1})  as the
\emph{defining identity} for  $f_1^\bullet$. We claim that there
exist morphisms (not chain maps) $f_l^\bullet \colon
X^\bullet\rightarrow X^{\bullet+l,-l}$ such that $\sum_{i=0}^l
d_i\circ f_{l-i}=f_{l-1} \circ d_X$ for all $l\geq 1$. Assume that
the required $f_1, \ldots ,f_l$ are chosen. The following
computation is similar to the one in the proof of \cite[Propositions
2.6 and 2.7]{Ric}.
\begin{align*}
& d_0 \circ (\sum_{1\leq i\leq l+1}d_i\circ f_{l+1-i}-f_l\circ d_X)\\
&\; = \sum_{1\leq i\leq l+1} (d_0\circ d_i)\circ f_{l+1-i}- d_0\circ f_l\circ d_X\\
&\; = \sum_{1\leq i\leq l+1} (-\sum_{1\leq j\leq i}d_j\circ d_{i-j})\circ f_{l+1-i}-d_0\circ f_l\circ d_X\\
&\; =-\sum_{1\leq j\leq l} d_j\circ (\sum_{0\leq i\leq l-j}d_i\circ
f_{l+1-j-i})-d_{l+1}\circ d_0\circ f_0 - d_0\circ f_l\circ d_X\\
&\;= -\sum_{1\leq j\leq l}d_j\circ(f_{l-j}\circ d_X)- d_0\circ
f_l\circ d_X\\
&\; =-\sum_{0\leq j\leq l} (d_j\circ f_{l-j}) \circ d_X\\
&\; =-f_{l-1}\circ d_X\circ d_X=0.
\end{align*}
Note that the second equality uses the identities on the
endomorphisms $d_l$'s; the fourth one uses the fact $d_0\circ f_0=0$
(note that $X^{n,1}=0$) and the defining identity for
$f_{l+1-j}^\bullet$; the sixth uses the defining identity for
$f_{l}^\bullet$. We infer that the morphism
$$\sum_{1\leq i\leq l+1}d_i^{n+l+1-i, -l-1+i}\circ
f_{l+1-i}^{n}-f_l^{n+1}\circ d_X^n\colon X^n\longrightarrow
X^{n+1+l,-l}$$ factors through ${\rm Ker}d_0^{n+1+l,-l}$ and then
factors through $X^{n+1+l, -l-1}$, since the induced  map $X^{n+1+l,
-l-1}\rightarrow {\rm Ker}d_0^{n+1+l,-l}$ is a right
$\mathcal{X}$-approximation. Take $f_{l+1}^n\colon X^n \rightarrow
X^{n+1+l,-l-1}$ to fulfil the factorization. This completes the
construction of $f_{l+1}^\bullet$'s and by induction we construct
all the $f_l^\bullet$'s.

\vskip3pt

 Consider the map $\sum_{i\geq
0}f_i^n\colon X^n \rightarrow T^n=\bigoplus_{i\geq 0}X^{n+i, -i}$.
One checks readily that this defines a chain map from $X^\bullet$ to
$T^\bullet$; moreover, this chain map makes $f^\bullet$ factor
through  $\varepsilon^\bullet$. This proves that
$\varepsilon^\bullet$ is a right
$\mathbf{C}(\mathcal{X})$-approximation of $M^\bullet$.
\end{proof}

\vskip 5 pt

The following result is a relative version of a well-known result
(\cite[p.439, Proposition 2.12]{Iv}).

\begin{prop}\label{keyproposition}
Let $\mathcal{X}\subseteq \mathcal{A}$ be a contravariantly finite
subcategory. Assume that $\mathcal{X}$ is admissible and
$\mathcal{X}\mbox{-res.dim}\; \mathcal{A}< \infty$. Then the natural
composite functor $\mathbf{K}(\mathcal{X}) \stackrel{\rm
inc}\rightarrow \mathbf{K}(\mathcal{A}) \stackrel{Q}\rightarrow
\mathbf{D}_\mathcal{X}(\mathcal{A})$ is a triangle-equivalence.
\end{prop}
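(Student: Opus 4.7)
My plan is to show the composite $F\colon \mathbf{K}(\mathcal{X}) \to \mathbf{D}_\mathcal{X}(\mathcal{A})$ is an equivalence by separately verifying essential surjectivity and full faithfulness; triangulated exactness is automatic since both the inclusion and the Verdier quotient are triangle functors and $\mathbf{K}(\mathcal{X})$ is a full triangulated subcategory of $\mathbf{K}(\mathcal{A})$. Essential surjectivity falls out immediately from Proposition \ref{keyobservation}: for any $M^\bullet \in \mathbf{K}(\mathcal{A})$ the right $\mathcal{X}$-quasi-isomorphism $\varepsilon\colon T^\bullet \to M^\bullet$ with $T^\bullet \in \mathbf{C}(\mathcal{X})$ becomes an isomorphism in $\mathbf{D}_\mathcal{X}(\mathcal{A})$, so $M^\bullet \cong F(T^\bullet)$.

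Full faithfulness will rest on a subsidiary claim that I would prove first: \emph{every right $\mathcal{X}$-acyclic complex $C^\bullet \in \mathbf{C}(\mathcal{X})$ is contractible in $\mathbf{K}(\mathcal{A})$}. Admissibility forces such a $C^\bullet$ to be genuinely acyclic, so setting $Z^n := \mathrm{Ker}\, d_C^n$ turns $\cdots \to C^{n-2} \to C^{n-1} \to Z^n \to 0$ into an honest $\mathcal{X}$-resolution of $Z^n$ (right $\mathcal{X}$-acyclicity of $C^\bullet$ makes each surjection $C^{n-1} \twoheadrightarrow Z^n$ a right $\mathcal{X}$-approximation). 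Applying Lemma \ref{homologydimension}(3) at the global bound $d := \mathcal{X}\mbox{-res.dim}\; \mathcal{A}$ then forces $Z^{n-d} \in \mathcal{X}$, and since $n$ is arbitrary every cocycle $Z^n$ lies in $\mathcal{X}$. Right $\mathcal{X}$-acyclicity of each short exact sequence $0 \to Z^n \to C^n \to Z^{n+1} \to 0$ allows one to lift the identity on $Z^{n+1}$ through $C^n \twoheadrightarrow Z^{n+1}$, producing compatible splittings that exhibit $C^\bullet$ as a direct sum of elementary contractible pieces $Z^{n+1} \xrightarrow{\mathrm{id}} Z^{n+1}$.

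Given the subsidiary claim, full faithfulness follows by standard calculus-of-fractions arguments. For faithfulness, if $f\colon X^\bullet \to Y^\bullet$ in $\mathbf{K}(\mathcal{X})$ becomes zero in the quotient then $f$ factors in $\mathbf{K}(\mathcal{A})$ through a right $\mathcal{X}$-acyclic $N^\bullet$; the approximation statement in Proposition \ref{keyobservation} lifts $X^\bullet \to N^\bullet$ through $T_N^\bullet \to N^\bullet$ with $T_N^\bullet \in \mathbf{C}(\mathcal{X})$ right $\mathcal{X}$-acyclic, hence contractible by the subsidiary claim, which forces $f = 0$. For fullness, a roof $X^\bullet \xleftarrow{s} Z^\bullet \xrightarrow{g} Y^\bullet$ is replaced via Proposition \ref{keyobservation} applied to $Z^\bullet$ by $X^\bullet \xleftarrow{s\varepsilon} T^\bullet \xrightarrow{g\varepsilon} Y^\bullet$ with $T^\bullet \in \mathbf{C}(\mathcal{X})$; the cone of $s\varepsilon$ lies in $\mathbf{C}(\mathcal{X})$ and is right $\mathcal{X}$-acyclic, hence contractible, so $s\varepsilon$ is a homotopy equivalence in $\mathbf{K}(\mathcal{A})$ and the roof collapses to an honest morphism in $\mathbf{K}(\mathcal{X})$. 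The main obstacle I anticipate is the subsidiary contractibility claim for unbounded complexes: the hypothesis $\mathcal{X}\mbox{-res.dim}\; \mathcal{A} < \infty$ is exactly what pins every cocycle $Z^n$ in $\mathcal{X}$, and without it the splitting argument collapses and unbounded-on-both-sides contractions cannot be constructed inductively.
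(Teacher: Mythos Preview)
Your proposal is correct and follows essentially the same route as the paper: essential surjectivity via Proposition~\ref{keyobservation}, and full faithfulness via the contractibility of right $\mathcal{X}$-acyclic complexes in $\mathbf{C}(\mathcal{X})$, proved exactly as you outline using Lemma~\ref{homologydimension}(3) together with the approximation clause of Proposition~\ref{keyobservation}. The only cosmetic difference is that the paper packages your faithfulness argument as the vanishing ${\rm Hom}_{\mathbf{K}(\mathcal{A})}(\mathbf{K}(\mathcal{X}),\mathcal{X}\mbox{-ac})=0$ and then quotes the general fact that ${}^\perp\mathcal{N}\hookrightarrow\mathcal{T}\to\mathcal{T}/\mathcal{N}$ is fully faithful, rather than unwinding the calculus of fractions by hand as you do.
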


\vskip 3pt

\begin{proof} The composite functor is clearly a triangle functor. It
suffices to show it is an equivalence of categories (see
\cite[p.4]{Ha1}). By Proposition \ref{keyobservation} for each
complex $M^\bullet$, there is an $\mathcal{X}$-resolution
$\varepsilon^\bullet\colon X^\bullet \rightarrow M^\bullet$, that
is, it is a right $\mathcal{X}$-quasi-isomorphism. Note that
$\varepsilon^\bullet$ becomes an isomorphism in the relative derived
category $\mathbf{D}_\mathcal{X}(\mathcal{A})$, in particular,
$Q(M^\bullet) \simeq Q\circ {\rm inc}(X^\bullet)$. Therefore the
composite functor is dense.

\vskip 3pt

 We claim that for each $X_0^\bullet\in
 \mathbf{K}(\mathcal{X})$ and each
 right $\mathcal{X}$-acyclic complex $M^\bullet\in \mathbf{K}(\mathcal{A})$, ${\rm Hom}_{\mathbf{K}(\mathcal{A})}
(X_0^\bullet, M^\bullet)=0$. This will complete the proof by the
following general fact: for a triangulated category $\mathcal{T}$
and a triangulated subcategory  $\mathcal{N}\subseteq \mathcal{T}$,
set  $^\perp\mathcal{N}=\{X\in \mathcal{T}\; |\; {\rm
Hom}_\mathcal{T}(X, N)=0 \mbox{ for all } N\in \mathcal{N}\}$ to be
the \emph{left perpendicular subcategory}, then the composite
functor $^\perp\mathcal{N} \stackrel{\rm inc} \rightarrow
\mathcal{T} \stackrel{Q}\rightarrow \mathcal{T}/\mathcal{N}$ is
fully faithful (\cite[5-3 Proposition]{V1}). The claim says
precisely that $\mathbf{K}(\mathcal{X}) \subseteq
{^\perp(\mathcal{X}\mbox{-ac})}$. By the recalled general fact the
composite functor is fully faithful. Note that the functor is dense
by above, thus it is an equivalence of categories. \vskip 3pt

 To see the
claim, take a chain map $f^\bullet\colon X_0^\bullet \rightarrow
M^\bullet$. By Proposition \ref{keyobservation} we may take an
$\mathcal{X}$-resolution $\varepsilon^\bullet\colon X^\bullet
\rightarrow M^\bullet$ which is a right
$\mathbf{C}(\mathcal{X})$-approximation. Hence $f^\bullet$ factors
through $\varepsilon^\bullet$. In fact, we will show that
$X^\bullet$ is null-homotopic, and then $\varepsilon^\bullet$ and
consequently $f^\bullet$ is homotopic to zero. Set
$\mathcal{X}\mbox{-res.dim}\; \mathcal{A}=n_0$. Note that
$X^\bullet=(X^n, d_X^n)_{n\in \mathbb{Z}}$ is right
$\mathcal{X}$-acyclic. Consider the canonical factorization $X^n
\stackrel{\partial^n}\rightarrow {\rm Ker}d_X^{n+1} \stackrel{\rm
inc} \rightarrow X^{n+1}$ of the differential $d_X^n$. Recall that
the complex $X^\bullet$ is null-homotopic if and only if the
morphisms $\partial^n$ are split epic. Note that the subcomplex
$\cdots \rightarrow X^{n-1}\rightarrow
X^n\stackrel{\partial^n}\rightarrow {\rm Ker}d_X^{n+1} \rightarrow
0$ can be viewed as a shifted $\mathcal{X}$-resolution. By Lemma
\ref{homologydimension}(3) we have that ${\rm Ker}d_X^{n-n_0+1}$
belongs to $\mathcal{X}$. Thus all the cocycles ${\rm Ker}d_X^n$ of
$X^\bullet$ lie in $\mathcal{X}$. Since $X^\bullet$ is right
$\mathcal{X}$-acyclic, the morphism $\partial^{n}\colon X^n
\rightarrow {\rm Ker}d_X^{n+1}$ is a right
$\mathcal{X}$-approximation. In particular, the identity map of
${\rm Ker}d_X^{n+1}$ factors through $\partial^{n}$, that is, the
morphism $\partial^n$ is split epic. We are done.
\end{proof}

\begin{rem}\label{construction}
The composite functor in Proposition \ref{keyproposition} factors as
$$\mathbf{K}(\mathcal{X})\stackrel{\rm inc}\longrightarrow
{^\perp(\mathcal{X}\mbox{-ac})} \stackrel{\rm inc}\longrightarrow
\mathbf{K}(\mathcal{A}) \stackrel{Q}\longrightarrow
\mathbf{D}_\mathcal{X}(\mathcal{A}).$$ By the recalled general fact,
the composite of the latter two functors is  fully faithful. Hence
the equivalence in Proposition \ref{keyproposition} will force the
equality $\mathbf{K}(\mathcal{X})={^\perp(\mathcal{X}\mbox{-ac})}$,
and it also implies  that the subcategory
$\mathcal{X}\mbox{-ac}\subseteq \mathbf{K}(\mathcal{A})$ is
\emph{left admissible} and $\mathbf{K}(\mathcal{X})\subseteq
\mathbf{K}(\mathcal{A})$ is \emph{right admissible} (=
\emph{Bousfield}) (\cite[Definition 1.2]{BK}; compare \cite[Chapter
9]{Ne}). Hence the inclusion functor ${\rm inc}:
\mathbf{K}(\mathcal{X}) \rightarrow \mathbf{K}(\mathcal{A})$ has  a
right adjoint $i^!\colon  \mathbf{K}(\mathcal{A}) \rightarrow
\mathbf{K}(\mathcal{X})$. The functor $i^!$ vanishes on
$\mathcal{X}\mbox{-ac}$ and then factors through the quotient
functor $Q\colon \mathbf{K}(\mathcal{A}) \rightarrow
\mathbf{D}_\mathcal{X}(\mathcal{A})$ canonically; by abuse of
notation we denote the resulting functor by $i^!\colon
\mathbf{D}_\mathcal{X}(\mathcal{A}) \rightarrow
\mathbf{K}(\mathcal{X})$. This functor is a quasi-inverse of the
composite functor in the Proposition \ref{keyproposition}. \vskip
5pt

 For later use, let us recall
the construction of the quasi-inverse functor $i^!$: for each
complex $M^\bullet$, choose a complex $i^!(M^\bullet) \in
\mathbf{K}(\mathcal{X})$ and fix a right
$\mathcal{X}$-quasi-isomorphism $\varepsilon^\bullet\colon
i^!(M^\bullet)\rightarrow M^\bullet$; for a chain map
$f^\bullet\colon M^\bullet \rightarrow M'^\bullet$, there is a
unique, up to homotopy, chain map $i^!(f^\bullet)\colon
i^!(M^\bullet) \rightarrow i^!(M'^\bullet)$ making the following
diagram commute, again up to homotopy

\[\xymatrix{
i^!(M^\bullet)\ar[r]^-{\varepsilon^\bullet}
\ar@{.>}[d]_{i^!(f^\bullet)}  & M^\bullet \ar[d]^-{f^\bullet}\\
i^!(M'^\bullet) \ar[r]^-{\varepsilon'^\bullet} & M'^\bullet }\] One
could deduce this by Proposition \ref{keyproposition}, or
alternatively, by noting that the cohomological functor ${\rm
Hom}_{\mathbf{K}(\mathcal{A})}(i^!(M^\bullet), -)$ vanishes on the
mapping cone of $\varepsilon'^\bullet$, and then one gets the
natural isomorphism
$${\rm Hom}_{\mathbf{K}(\mathcal{A})}(i^!(M^\bullet), i^!(M'^\bullet))\simeq {\rm
Hom}_{\mathbf{K}(\mathcal{A})}(i^!(M^\bullet), M'^\bullet).$$
 In this way one defines the functor $i^!$ on homotopy categories, which
 induces the pursued functor $i^!\colon \mathbf{D}_\mathcal{X}(\mathcal{A}) \rightarrow
\mathbf{K}(\mathcal{X}) $. \hfill $\square$
\end{rem}

\section{Proof of Theorem A}

In this section we prove Theorem A.

\vskip 5pt

Let $\mathcal{A}$ be an abelian category. Let $(\mathcal{X},
\mathcal{Y})$ be an admissible balanced pair in $\mathcal{A}$ of
finite dimension. For each complex $X^\bullet \in
\mathbf{K}(\mathcal{X})$, choose a complex $F(X^\bullet)\in
\mathbf{K}(\mathcal{Y})$ and fix a left
$\mathcal{Y}$-quasi-isomorphism $X^\bullet
\stackrel{\theta_X^\bullet}\rightarrow F(X^\bullet)$ (see
Proposition \ref{keyobservation}); for each chain map
$f^\bullet\colon X^\bullet\rightarrow X'^\bullet$ there is a unique,
up to homotopy, chain map $F(f^\bullet)\colon F(X^\bullet)
\rightarrow F(X'^\bullet)$ such that $F(f^\bullet)\circ
\theta_X^\bullet=\theta_{X'}^\bullet \circ f^\bullet$, again up to
homotopy; see Proposition \ref{keyproposition}. This defines a
triangle functor $F\colon \mathbf{K}(\mathcal{X}) \rightarrow
\mathbf{K}(\mathcal{Y})$.

\begin{thm}\label{thm:theoremA}
Let $\mathcal{A}$ be an abelian category. Let $(\mathcal{X},
\mathcal{Y})$ be a balanced pair of subcategories in $\mathcal{A}$
which is admissible and of finite dimension. Then the above defined
triangle functor $F\colon \mathbf{K}(\mathcal{X}) \simeq
\mathbf{K}(\mathcal{Y})$ is an equivalence.
\end{thm}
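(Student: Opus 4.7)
The natural strategy is to route the comparison through the relative derived category. By Proposition \ref{firstprop} the balancedness of $(\mathcal{X},\mathcal{Y})$ forces the triangulated subcategory $\mathcal{X}\mbox{-ac} \subseteq \mathbf{K}(\mathcal{A})$ of right $\mathcal{X}$-acyclic complexes to coincide with the subcategory $\mathcal{Y}\mbox{-ac}$ of left $\mathcal{Y}$-acyclic complexes. Hence the two Verdier quotients agree on the nose:
\[\mathbf{D}_\mathcal{X}(\mathcal{A}) \;=\; \mathbf{K}(\mathcal{A})/\mathcal{X}\mbox{-ac} \;=\; \mathbf{K}(\mathcal{A})/\mathcal{Y}\mbox{-ac} \;=\; \mathbf{D}_\mathcal{Y}(\mathcal{A}).\]
Denote this common triangulated category by $\mathbf{D}$, with quotient functor $Q\colon \mathbf{K}(\mathcal{A})\to \mathbf{D}$.

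Now I apply Proposition \ref{keyproposition} and its dual. The hypotheses on $\mathcal{X}$ are the very assumptions of the theorem; the dual hypotheses for $\mathcal{Y}$ follow from Corollaries \ref{cor:admissible} and \ref{cor:finitedimension}, which give that $\mathcal{Y}$ is coadmissible and that its global coresolution dimension equals $\mathcal{X}\mbox{-res.dim}\;\mathcal{A}<\infty$. One therefore obtains two triangle-equivalences
\[G\colon \mathbf{K}(\mathcal{X}) \stackrel{\rm inc}\longrightarrow \mathbf{K}(\mathcal{A}) \stackrel{Q}\longrightarrow \mathbf{D}, \qquad H\colon \mathbf{K}(\mathcal{Y}) \stackrel{\rm inc}\longrightarrow \mathbf{K}(\mathcal{A}) \stackrel{Q}\longrightarrow \mathbf{D}.\]
Choosing a quasi-inverse $H^{-1}$ of $H$, the composite $H^{-1}\circ G\colon \mathbf{K}(\mathcal{X})\simeq \mathbf{K}(\mathcal{Y})$ is a triangle-equivalence between the two homotopy categories.

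It remains to identify this abstract composite with the explicit functor $F$ defined just before the theorem. A concrete quasi-inverse to $H$ is given, by the dual of Remark \ref{construction}, by assigning to each complex $M^\bullet$ a complex $j^*(M^\bullet)\in \mathbf{K}(\mathcal{Y})$ together with a fixed left $\mathcal{Y}$-quasi-isomorphism $M^\bullet \to j^*(M^\bullet)$; existence of such a quasi-isomorphism is the dual of Proposition \ref{keyobservation}. Specialized to $M^\bullet = X^\bullet \in \mathbf{K}(\mathcal{X})$, this recipe is precisely the data $\theta_X^\bullet\colon X^\bullet \to F(X^\bullet)$ used in the definition of $F$. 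The main obstacle, and the step that deserves genuine care, is to verify that $F$ on morphisms agrees with the action of $H^{-1}\circ G$ on morphisms up to natural isomorphism; this reduces to the uniqueness, up to homotopy, of the lift $F(f^\bullet)$, which is built into the definition of $F$ and which in turn follows from the full faithfulness asserted in Proposition \ref{keyproposition} (applied on the $\mathcal{Y}$-side). Thus $F\cong H^{-1}\circ G$, and $F$ is a triangle-equivalence.
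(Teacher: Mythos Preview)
Your proof is correct and follows essentially the same route as the paper's: both identify $\mathbf{D}_\mathcal{X}(\mathcal{A})=\mathbf{D}_\mathcal{Y}(\mathcal{A})$ via Proposition~\ref{firstprop}, invoke Proposition~\ref{keyproposition} and its dual to obtain equivalences $\mathbf{K}(\mathcal{X})\simeq \mathbf{D}\simeq \mathbf{K}(\mathcal{Y})$, and then identify the composite with the explicitly defined functor $F$ using the construction in (the dual of) Remark~\ref{construction}. Your write-up is in fact slightly more explicit than the paper's in justifying the dual hypotheses for $\mathcal{Y}$ and in spelling out why $F$ agrees with $H^{-1}\circ G$ on morphisms.
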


\begin{proof} Note that by Proposition \ref{firstprop}, the full
subcategories $\mathcal{X}\mbox{-ac}=\mathcal{Y}\mbox{-ac}$. Here
$\mathcal{Y}\mbox{-ac}$ means the full subcategory of
$\mathbf{K}(\mathcal{A})$ consisting of left $\mathcal{Y}$-acyclic
complexes. Then we have
$\mathbf{D}_\mathcal{X}(\mathcal{A})=\mathbf{D}_\mathcal{Y}(\mathcal{A})$.
Applying the dual of Proposition \ref{keyproposition} to
$\mathcal{Y}$, we get a natural triangle-equivalence
$\mathbf{K}(\mathcal{Y}) \stackrel{\sim}\longrightarrow
\mathbf{D}_{\mathcal{Y}}(\mathcal{A})$. Composing a quasi-inverse of
this equivalence with the equivalence in Proposition
\ref{keyproposition}, we get a triangle-equivalence $F'\colon
\mathbf{K}(\mathcal{X})\stackrel{\sim}\longrightarrow
\mathbf{K}(\mathcal{Y})$.

Now observe that  the triangle-equivalence $F'$ coincides with the
functor $F$ just defined above. This follows from the construction
in Remark \ref{construction}, while here we need to dualize the
argument to construct a quasi-inverse functor of the equivalence
$\mathbf{K}(\mathcal{Y})\stackrel{\sim}\longrightarrow
\mathbf{D}_\mathcal{Y}(\mathcal{A})$.
\end{proof}

\section{Proof of Theorem B}
In this section  we apply  Theorem \ref{thm:theoremA} to obtain
Theorem B. We will make use of a characterization theorem of
left-Gorenstein rings by Beligiannis (\cite{Bel}).

\vskip 5pt

Let $R$ be a ring with identity. Denote by $R\mbox{-Mod}$ the
category of left $R$-modules and  by $\mathcal{L}$ the full
subcategory consisting of modules with finite projective and
injective dimension. Following \cite{Bel} a ring $R$ is called
left-Gorenstein provided that any module in $R\mbox{-Mod}$ has
finite projective dimension if and only if it has finite injective
dimension. In this case, by \cite[Theorem 6.9($\delta$)]{Bel} there
is a uniform upper bound $d$ such that each module in $\mathcal{L}$
has projective and injective dimensions less or equal to $d$. We
will denote by ${\rm G.dim}\; R$ the minimal bound.

\vskip 5pt

We collect in the following lemma some crucial properties of
left-Gorenstein rings.

\begin{lem} \label{lem:left-Gorenstein} Let $R$ be a left-Gorenstein ring. Then we have the
following:
\begin{enumerate}
\item the triple $(R\mbox{-{\rm GProj}}, \mathcal{L}, R\mbox{-{\rm GInj}})$ is a
 complete and hereditary cotorsion triple;
\item $R\mbox{-{\rm GProj}}\mbox{-res.dim}\; R\mbox{-{\rm Mod}}={\rm G.dim}\; R= R\mbox{-{\rm GInj}}\mbox{-cores.dim}\;
R\mbox{-{\rm Mod}}$.
\end{enumerate}
\end{lem}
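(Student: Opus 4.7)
The plan is to deduce both parts from Beligiannis's characterization theorem \cite[Theorem 6.9]{Bel} for left-Gorenstein rings, supplemented by classical facts from Gorenstein homological algebra together with Lemma \ref{homologydimension}. For part (1), I would verify each ingredient of the cotorsion triple in turn. That $(R\mbox{-{\rm GProj}}, \mathcal{L})$ is a cotorsion pair rests on two observations: first, for any $G \in R\mbox{-{\rm GProj}}$ and any $L \in \mathcal{L}$, one has ${\rm Ext}^{\geq 1}_R(G, L) = 0$, since a complete projective resolution of $G$ together with the finite injective dimension of $L$ witnesses this vanishing; second, the reverse perpendicular identification ${}^\perp \mathcal{L} = R\mbox{-{\rm GProj}}$ is part of Beligiannis's theorem. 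Completeness of this cotorsion pair also comes from that theorem, and the hereditary property reduces to the classical fact that $R\mbox{-{\rm GProj}}$ is resolving (contains projectives, closed under extensions and under kernels of epimorphisms between Gorenstein projectives). The pair $(\mathcal{L}, R\mbox{-{\rm GInj}})$ is handled dually.

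For part (2), I would combine Lemma \ref{homologydimension} with the bound $d = {\rm G.dim}\, R$. By \cite[Theorem 6.9($\delta$)]{Bel} every $R$-module $M$ has Gorenstein projective dimension at most $d$; concretely, the $d$-th syzygy in any projective resolution of $M$ is already Gorenstein projective, producing an $R\mbox{-{\rm GProj}}$-resolution of length at most $d$, so $R\mbox{-{\rm GProj}}\mbox{-res.dim}\; R\mbox{-{\rm Mod}} \leq d$. For the matching lower bound, I would exhibit a module $L \in \mathcal{L}$ of projective dimension exactly $d$ (which exists by the very definition of ${\rm G.dim}\, R$) and invoke the classical identity that Gorenstein projective dimension agrees with ordinary projective dimension on modules of finite projective dimension, to conclude that $L$ has $R\mbox{-{\rm GProj}}$-resolution dimension exactly $d$. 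The statement for $R\mbox{-{\rm GInj}}$ is obtained by the dual argument, and equality of the two global dimensions is also an instance of Corollary \ref{cor:finitedimension} applied to the admissible balanced pair arising from (1) via Proposition \ref{prop:blanced}.

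The main obstacle I anticipate lies in the sharp bound in part (2). A naive splicing argument, taking a Gorenstein projective precover of $M$ with kernel $L \in \mathcal{L}$ and concatenating with a projective resolution of $L$, yields only the weaker bound $d+1$. Getting the tight bound $d$ requires the stronger content of Beligiannis's theorem, namely that the $d$-th syzygy itself is Gorenstein projective, and the matching lower bound requires care to rule out an accidental drop in dimension when passing from a projective resolution to a Gorenstein projective resolution of a witness module.
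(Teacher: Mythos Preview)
Your proposal is correct and follows essentially the same route as the paper: both parts are deduced from Beligiannis's characterization \cite[Theorem~6.9]{Bel}, with part~(1) coming from items~(4) and~(5) there and part~(2) from item~($\alpha$). The paper simply cites these results (and offers \cite{EEG} as an alternative reference for~(1)), whereas you have unpacked more of the content; your discussion of the sharp bound in~(2) is a useful elaboration but not a genuinely different argument.
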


\begin{proof}
We infer (1) by \cite[Theorem 6.9(4) and (5)]{Bel} (which is
presented in quite a different terminology). One might deduce (1)
also from \cite[Theorems 2.25 and 2.26]{EEG}. Just note that for a
left-Gorenstein ring $R$, $R\mbox{-Mod}$ is a Gorenstein category in
the sense of Enochs, Estrada and Garc\'{\i}a Roza (\cite[Definition
2.18]{EEG}). The statement (2) follows from \cite[Theorem
6.9($\alpha$)]{Bel}.
\end{proof}

We are in the position to prove Theorem B.

\begin{thm}\label{thm:theoremB}
Let $R$  be a left-Gorenstein ring. Then we have a
triangle-equivalence $\mathbf{K}(R\mbox{-{\rm GProj}}) \simeq
\mathbf{K}(R\mbox{-{\rm GInj}})$, which restricts to a
triangle-equivalence $\mathbf{K}(R\mbox{-{\rm Proj}}) \simeq
\mathbf{K}(R\mbox{-{\rm Inj}})$.
\end{thm}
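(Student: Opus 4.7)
The plan is to apply Theorem \ref{thm:theoremA} to the balanced pair $(R\mbox{-GProj}, R\mbox{-GInj})$ to obtain the first equivalence, and then to verify that it restricts to an equivalence between $\mathbf{K}(R\mbox{-Proj})$ and $\mathbf{K}(R\mbox{-Inj})$.

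First, I verify the hypotheses of Theorem \ref{thm:theoremA}. By Lemma \ref{lem:left-Gorenstein}(1), the triple $(R\mbox{-GProj}, \mathcal{L}, R\mbox{-GInj})$ is a complete and hereditary cotorsion triple, so Proposition \ref{prop:blanced} gives that $(R\mbox{-GProj}, R\mbox{-GInj})$ is an admissible balanced pair. By Lemma \ref{lem:left-Gorenstein}(2), this pair has finite global dimension equal to ${\rm G.dim}\; R$. Theorem \ref{thm:theoremA} then delivers a triangle-equivalence $F\colon \mathbf{K}(R\mbox{-GProj}) \simeq \mathbf{K}(R\mbox{-GInj})$.

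For the restriction, set $d = {\rm G.dim}\; R$. Given $P^\bullet \in \mathbf{K}(R\mbox{-Proj})$, each term $P^n$ lies in $\mathcal{L}$ and therefore admits an injective coresolution of length at most $d$. Applying the dual of the quasi-bicomplex construction preceding Proposition \ref{keyobservation} term by term and totalizing, I obtain a complex $I^\bullet \in \mathbf{K}(R\mbox{-Inj})$ together with a chain map $\varepsilon^\bullet\colon P^\bullet \rightarrow I^\bullet$. The crucial claim is that $\varepsilon^\bullet$ is a left $R\mbox{-GInj}$-quasi-isomorphism; granting this, $P^\bullet$ and $I^\bullet$ become isomorphic in $\mathbf{D}_{R\mbox{-GInj}}(R\mbox{-Mod}) = \mathbf{D}_{R\mbox{-GProj}}(R\mbox{-Mod})$ (the identification coming from Proposition \ref{firstprop}), whence $F(P^\bullet) \simeq I^\bullet$ in $\mathbf{K}(R\mbox{-GInj})$. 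A symmetric argument, using Proposition \ref{keyobservation} applied to $R\mbox{-Proj}$ and a complex of injectives, shows that every object of $\mathbf{K}(R\mbox{-Inj})$ lies in the essential image of $F$ restricted to $\mathbf{K}(R\mbox{-Proj})$, giving the restricted equivalence.

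The main obstacle is establishing the crucial claim that $\varepsilon^\bullet$ is a left $R\mbox{-GInj}$-quasi-isomorphism rather than merely a left $R\mbox{-Inj}$-quasi-isomorphism. The argument I plan to use runs as follows: each column $0 \rightarrow P^n \rightarrow I^{n,0} \rightarrow \cdots \rightarrow I^{n,d} \rightarrow 0$ of the quasi-bicomplex has all terms in $\mathcal{L}$ (both projectives and injectives lie there), and since $(\mathcal{L}, R\mbox{-GInj})$ is a hereditary cotorsion pair, $\mbox{Ext}_R^i(L, J) = 0$ for every $L \in \mathcal{L}$, $J \in R\mbox{-GInj}$, and $i \geq 1$. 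Hence $\mbox{Hom}_R(-, J)$ preserves exactness of each column. A spectral sequence argument analogous to the one in the proof of Proposition \ref{keyobservation} then shows that the mapping cone of $\varepsilon^\bullet$ is left $R\mbox{-GInj}$-acyclic. The corresponding vanishing for the cotorsion pair $(R\mbox{-GProj}, \mathcal{L})$ handles the dual construction needed for essential surjectivity.
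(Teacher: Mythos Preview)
Your proof is correct and follows essentially the same approach as the paper: apply Theorem~\ref{thm:theoremA} for the first equivalence, then show that the totalization construction sends $\mathbf{K}(R\mbox{-Proj})$ into $\mathbf{K}(R\mbox{-Inj})$ (and dually for the quasi-inverse) by checking that finite injective resolutions of projective modules are compatible with the Gorenstein-injective side. The paper streamlines your last step slightly: rather than invoking ${\rm Ext}^i_R(\mathcal{L}, R\mbox{-GInj})=0$ and rerunning the spectral-sequence argument, it uses ${\rm Ext}^i_R(R\mbox{-GProj}, \mathcal{L})=0$ together with Proposition~\ref{firstprop} to see that the injective resolution of each $P^n$ is already an $R\mbox{-GInj}$-coresolution, so the dual of Proposition~\ref{keyobservation} applies verbatim and no separate spectral-sequence computation is needed.
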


\begin{proof} Combining Lemma \ref{lem:left-Gorenstein}(1) and
Proposition \ref{prop:blanced} together, we infer that the pair
$(R\mbox{-GProj}, R\mbox{-GInj})$ is an admissible balanced pair in
$R\mbox{-Mod}$. It is of finite dimension by Lemma
\ref{lem:left-Gorenstein}(2). By Theorem \ref{thm:theoremA} we get a
triangle-equivalence $F\colon
\mathbf{K}(R\mbox{-GProj})\stackrel{\sim}\longrightarrow
\mathbf{K}(R\mbox{-GInj})$. Denote by $F^{-1}$ its quasi-inverse.
Remind that the construction of the functors $F$ and $F^{-1}$ is
described  before Theorem \ref{thm:theoremA} (and its dual).

\vskip 3pt

 Recall that for a Gorenstein projective module $G$ we
have ${\rm Ext}_R^i(G, P)=0$ for $i\geq 1$ and all projective
modules $P$ (\cite[Lemma 2.2]{CFH}). Then by the dimension-shift
technique we have ${\rm Ext}_R^i(G, M)=0$ for $i\geq 1$ and all
modules $M$ of finite projective dimension. Set $d={\rm G.dim}\; R$.
For a projective module $P$ consider its injective resolution
$0\rightarrow P\rightarrow I^0\rightarrow I^1 \rightarrow \cdots
\rightarrow I^d\rightarrow 0$. Write it as $P\rightarrow I^\bullet$.
It is obvious that for a Gorenstein projective module $G$, ${\rm
Hom}_R(G, I^\bullet)$ has no cohomology in non-zero degrees, for it
computes ${\rm Ext}^*_R(G, P)$. Thus the injective resolution is
right $R\mbox{-GProj} $-acyclic, and by Proposition \ref{firstprop},
it is also left $R\mbox{-GInj}$-acyclic. In particular, it is an
$R\mbox{-GInj}$-coresolution. Take a complex $P^\bullet$ in
$\mathbf{K}(R\mbox{-Proj})$. Consider the construction of
$R\mbox{-GInj}$-coresolution as in the dual of Proposition
\ref{keyobservation}. We find that the $R\mbox{-GInj}$-coresolution
of $P^\bullet$ is a complex consisting of injective modules. That
is, the essential image of $\mathbf{K}(R\mbox{-Proj})$ under $F$
lies in $\mathbf{K}(R\mbox{-Inj})$. Dually the essential image of
$\mathbf{K}(R\mbox{-Inj})$ under $F^{-1}$ lies in
$\mathbf{K}(R\mbox{-Proj})$. Consequently, we have a restricted
equivalence $\mathbf{K}(R\mbox{-Proj})\simeq
\mathbf{K}(R\mbox{-Inj})$.
\end{proof}

\section{Comparison of Equivalences}

In the last section we compare the equivalences in Theorem
\ref{thm:theoremB} with Iyengar-Krause's equivalence (\cite{IK}) in
the case of commutative Gorenstein rings. In this case, it turns out
that  up to a natural isomorphism the first equivalence in Theorem
\ref{thm:theoremB} extends Iyengar-Krause's equivalence.

\vskip 5pt

Let $R$ be a commutative Gorenstein ring of dimension $d$. Take its
injective resolution $0\rightarrow R
\stackrel{\varepsilon}\rightarrow I^0 \rightarrow I^1\rightarrow
\cdots \rightarrow I^d \rightarrow 0$. Write it as
$R\stackrel{\varepsilon }\rightarrow I^\bullet$. Then the complex
$I^\bullet$ is a \emph{dualizing complex}; for details, see
\cite[Chapter V, \S 2]{Har}, \cite[Section 3]{IK} and \cite[Appendix
A]{CFH}.

\vskip 5pt

Note that the ring $R$ is noetherian, and then the class of
injective modules is closed under coproducts. One infers that for a
projective module $P$ and an injective module $I$ the tensor module
$P\otimes_R I$ is injective. Then we have a well-defined triangle
functor
$$-\otimes_R I^\bullet:\;  \mathbf{K}(R\mbox{-Proj})\longrightarrow \mathbf{K}(R\mbox{-Inj}).$$
 By \cite[Theorem 4.2]{IK} this is a triangle-equivalence, which we will call
\emph{Iyengar-Krause's equivalence}.

\vskip 5pt

We note the following fact.

\begin{lem} \label{lem:tensor} {\rm (\cite[Corollary 5.7]{CH})} Let
$R$ be a commutative Gorenstein ring. Then for a Gorenstein
projective module $G$ and an injective module $I$, the tensor module
$G\otimes_R I$ is Gorenstein injective. \hfill $\square$
\end{lem}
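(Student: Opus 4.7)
My strategy is to construct a complete injective coresolution of $G\otimes_R I$ directly from a complete projective resolution of $G$. Fix a complete projective resolution $P^\bullet$ of $G$, so that $P^\bullet$ is an acyclic complex of projectives with $Z^0(P^\bullet)=G$ and ${\rm Hom}_R(P^\bullet,Q)$ acyclic for every projective $Q$. The candidate coresolution is $J^\bullet:=P^\bullet\otimes_R I$, and I would verify that it is a totally acyclic complex of injectives with $Z^0(J^\bullet)\cong G\otimes_R I$, which exhibits $G\otimes_R I$ as a Gorenstein injective module.

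The verification splits into three points. First, each $J^n=P^n\otimes_R I$ is injective: since $R$ is commutative noetherian, direct sums of injectives are injective (a theorem of Bass), hence $I^{(\Lambda)}\cong R^{(\Lambda)}\otimes_R I$ is injective and $J^n$ is a direct summand of such. Second, $J^\bullet$ is acyclic with $Z^n(J^\bullet)\cong Z^n(P^\bullet)\otimes_R I$; for this I would use that $R$ is Gorenstein to conclude ${\rm fd}_R I<\infty$, and then apply an iterated dimension-shift through the short exact sequences $0\to Z^n\to P^n\to Z^{n+1}\to 0$. Since $P^n$ is flat, the Tor long exact sequence gives ${\rm Tor}^R_i(Z^n,I)\cong{\rm Tor}^R_{i+1}(Z^{n+1},I)$ for every $i\geq 1$, and iterating past ${\rm fd}_R I$ forces ${\rm Tor}^R_i(Z^n,I)=0$ for all $i\geq 1$. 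These Tor-vanishings keep each of the short exact sequences of cycles exact after $\otimes_R I$, yielding acyclicity of $J^\bullet$ together with $Z^0(J^\bullet)=G\otimes_R I$.

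The genuine obstacle is the third point: showing ${\rm Hom}_R(E,J^\bullet)$ is acyclic for every injective $E$. My approach would invoke the classical fact (due to Enochs) that ${\rm Hom}_R(E,I)$ is flat whenever $E$ and $I$ are injective modules over a commutative noetherian ring, and then try to identify ${\rm Hom}_R(E,J^\bullet)$ with $P^\bullet\otimes_R{\rm Hom}_R(E,I)$ via the canonical evaluation map $P\otimes_R{\rm Hom}_R(E,I)\to{\rm Hom}_R(E,P\otimes_R I)$. If one had this identification, acyclicity would be immediate, since $P^\bullet$ is acyclic and ${\rm Hom}_R(E,I)$ is flat. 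The difficulty is that the evaluation map is an isomorphism only when its first argument is finitely presented, whereas the $P^n$ may be arbitrarily large projectives. Circumventing this --- for instance, by writing each $P^n$ as a filtered colimit of finitely generated projectives and using Matlis's decomposition of $E$ as a sum of indecomposable injectives $E(R/\mathfrak{p})$ to pass the relevant colimit past ${\rm Hom}_R(E,-)$, or alternatively by a spectral-sequence argument on a finite flat resolution of $E$ --- is the technical heart of the proof, and is the part where I expect the real work to lie. Granting this, $J^\bullet$ is a complete injective coresolution of $G\otimes_R I$, so $G\otimes_R I\in R\mbox{-{\rm GInj}}$.
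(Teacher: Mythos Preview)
The paper does not actually prove this lemma: it is quoted verbatim from Christensen--Holm \cite[Corollary 5.7]{CH} and closed with a $\square$. So there is no in-paper argument to compare against; your proposal stands on its own and is considerably more direct than the Auslander/Bass-class machinery used in \cite{CH}.

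Your points (1) and (2) are correct as stated. The perceived ``genuine obstacle'' in point (3), however, is self-inflicted: you are trying to commute a Hom past a tensor, but you do not need to. Simply mirror the dimension-shift you already used for (2). Over a Gorenstein ring every injective module $E$ has finite projective dimension, say ${\rm pd}_R\,E\le d$. From (1) and (2) you have short exact sequences $0\to Z^{n}(J^\bullet)\to J^{n}\to Z^{n+1}(J^\bullet)\to 0$ with $J^{n}$ injective, so the long exact sequence gives
\[
{\rm Ext}^{i}_{R}\bigl(E,\,Z^{n+1}(J^\bullet)\bigr)\;\cong\;{\rm Ext}^{i+1}_{R}\bigl(E,\,Z^{n}(J^\bullet)\bigr)\qquad(i\ge 1),
\]
and iterating $d$ times yields ${\rm Ext}^{1}_{R}\bigl(E,\,Z^{n}(J^\bullet)\bigr)\cong{\rm Ext}^{d+1}_{R}\bigl(E,\,Z^{n-d}(J^\bullet)\bigr)=0$. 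Hence every short exact sequence of cocycles stays exact under ${\rm Hom}_{R}(E,-)$, and ${\rm Hom}_{R}(E,J^\bullet)$ is acyclic. No evaluation isomorphism, Matlis decomposition, or spectral sequence is needed; the Gorenstein hypothesis enters exactly once more, to bound ${\rm pd}_R\,E$, in perfect parallel to your use of ${\rm fd}_R\,I<\infty$ in (2).
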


By the lemma above we can extend Iyengar-Krause's equivalence to a
triangle functor
$$-\otimes_R I^\bullet\colon \; \mathbf{K}(R\mbox{-GProj})\longrightarrow \mathbf{K}(R\mbox{-GInj}).$$

\vskip 5pt

 Recall the construction of the equivalence
 $F\colon \mathbf{K}(R\mbox{-GProj}) \stackrel{\sim}\longrightarrow \mathbf{K}(R\mbox{-GInj})$
 in  Theorem \ref{thm:theoremB}. For
each $G^\bullet \in \mathbf{K}(R\mbox{-GProj})$ choose an
$R\mbox{-GInj}$-coresolution  $\theta_G^\bullet \colon
G^\bullet\rightarrow F(G^\bullet) $; for each $f^\bullet \colon
G^\bullet \rightarrow G'^\bullet$ there is a unique, up to homotopy,
chain map $F(f^\bullet)\colon F(G^\bullet)\rightarrow F(G'^\bullet)$
such that $F(f^\bullet)\circ \theta_G^\bullet=\theta_{G'}^\bullet
\circ f^\bullet$. This defines the triangle functor $F$. Consult the
construction before Theorem \ref{thm:theoremA}.

\vskip 5pt

 Note that the mapping cone ${\rm Cone}(\theta_G^\bullet)$
of $\theta_G^\bullet$ is left $R\mbox{-GInj}$-acyclic. By the proof
of Proposition \ref{keyproposition} we have
$${\rm
Hom}_{\mathbf{K}(R\mbox{-}{\rm Mod})}({\rm Cone}(\theta_G^\bullet),
G^\bullet \otimes_R I^\bullet[n])=0, \; \mbox{for all } n\in
\mathbb{Z}.$$ By applying the cohomological functor ${\rm
Hom}_{\mathbf{K}(R\mbox{-}{\rm Mod})}(-, G^\bullet \otimes_R
I^\bullet)$ to the distinguished triangle associated to
$\theta_G^\bullet$, we deduce a natural isomorphism of abelian
groups
$${\rm Hom}_{\mathbf{K}(R\mbox{-}{\rm Mod})}(G^\bullet, G^\bullet
\otimes_R I^\bullet)\simeq {\rm Hom}_{\mathbf{K}(R\mbox{-}{\rm
Mod})}(F(G^\bullet), G^\bullet \otimes_R I^\bullet).$$
 Note that there is a natural chain map ${\rm Id}_{G^\bullet}\otimes_R
\varepsilon\colon  G^\bullet\rightarrow G^\bullet\otimes_R
I^\bullet$. By the above isomorphism, there exists a unique, up to
homotopy, chain map $\eta_{G^\bullet}\colon  F(G^\bullet)\rightarrow
G^\bullet \otimes I^\bullet$ such that $ \eta_{G^\bullet}\circ
\theta_{G}^\bullet={\rm Id}_{G^\bullet}\otimes_R \varepsilon$. It is
routine to check that this defines a natural transformation of
triangle functors
$$\eta\colon F \; \longrightarrow -\otimes_R I^\bullet.$$

\vskip 10pt

The following result states that  in the case of commutative
Gorenstein rings the first equivalence in Theorem \ref{thm:theoremB}
extends Iyengar-Krause's equivalence, up to a natural isomorphism.

\vskip 5pt

\begin{prop} \label{prop:comparisonequivalence}  Use the notation as
above. Then for each complex $P^\bullet\in \mathbf{K}(R\mbox{-{\rm
Proj}})$, the chain map $\eta_{P^\bullet}$ is an isomorphism in
$\mathbf{K}(R\mbox{-{\rm GInj}})$.
\end{prop}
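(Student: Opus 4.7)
The plan is to show that the chain map $\mathrm{Id}_{P^\bullet}\otimes_R \varepsilon\colon P^\bullet\rightarrow P^\bullet \otimes_R I^\bullet$ is itself an $R\mbox{-GInj}$-coresolution of $P^\bullet$; granted this, $F(P^\bullet)$ and $P^\bullet\otimes_R I^\bullet$ become two $R\mbox{-GInj}$-coresolutions both lying in $\mathbf{K}(R\mbox{-GInj})$ (the second even in $\mathbf{K}(R\mbox{-Inj})$, because in a commutative noetherian ring the tensor product of a projective module with an injective module is injective, as recalled just before the statement). By Lemma \ref{lem:left-Gorenstein} the balanced pair $(R\mbox{-GProj},R\mbox{-GInj})$ is admissible and of finite dimension, so the dual of Proposition \ref{keyproposition} yields a triangle-equivalence $\mathbf{K}(R\mbox{-GInj}) \simeq \mathbf{D}_{R\mbox{-GInj}}(R\mbox{-Mod})$. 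The defining identity $\eta_{P^\bullet}\circ \theta_{P}^\bullet = \mathrm{Id}_{P^\bullet}\otimes \varepsilon$, combined with the fact that both $\theta_P^\bullet$ and $\mathrm{Id}_{P^\bullet}\otimes \varepsilon$ will then be left $R\mbox{-GInj}$-quasi-isomorphisms, forces $\eta_{P^\bullet}$ to be an isomorphism in $\mathbf{D}_{R\mbox{-GInj}}(R\mbox{-Mod})$, and hence in $\mathbf{K}(R\mbox{-GInj})$.

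The real content is to show that $\mathrm{Id}_{P^\bullet}\otimes \varepsilon$ is a left $R\mbox{-GInj}$-quasi-isomorphism, i.e., that its mapping cone is left $R\mbox{-GInj}$-acyclic. Writing $C^\bullet = \mathrm{Cone}(\varepsilon\colon R\rightarrow I^\bullet)$ for the bounded acyclic complex whose nonzero terms are $R$ and the $I^j$, one checks that up to the standard sign conventions
\[
\mathrm{Cone}(\mathrm{Id}_{P^\bullet}\otimes \varepsilon)\; \cong \; \mathrm{tot}(P^\bullet\otimes_R C^\bullet).
\]
Fix $J\in R\mbox{-GInj}$. The first subclaim is that the complex $\mathrm{Hom}_R(C^\bullet, J)$ is acyclic: splicing the bounded acyclic $C^\bullet$ into short exact sequences exhibits the cocycles as $R$ and the cosyzygies $Y^i$ of the injective resolution of $R$, each of which has finite injective dimension and therefore, by the left-Gorenstein hypothesis, finite projective dimension; a dimension-shift argument dual to the one in the proof of Theorem \ref{thm:theoremB} yields $\mathrm{Ext}^{\geq 1}_R(L,J)=0$ whenever $L$ has finite projective dimension and $J$ is Gorenstein injective, so each of the spliced short exact sequences remains exact under $\mathrm{Hom}_R(-,J)$. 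The second subclaim is that Hom-tensor adjunction produces an isomorphism of complexes
\[
\mathrm{Hom}_R(\mathrm{tot}(P^\bullet\otimes_R C^\bullet),J)\; \cong\; \mathrm{tot}\bigl(\mathrm{Hom}_R(P^i,\mathrm{Hom}_R(C^j,J))\bigr)_{i,j};
\]
since each $P^i$ is projective, the exact functor $\mathrm{Hom}_R(P^i,-)$ preserves the acyclicity of $\mathrm{Hom}_R(C^\bullet,J)$, so every row of this bicomplex is acyclic. The bicomplex has only finitely many nonzero columns (as $C^\bullet$ is bounded), so each degree of its total complex is a finite direct sum, and a standard spectral-sequence argument gives the acyclicity of the total complex.

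The main obstacle I anticipate is keeping clean the combination of these two acyclicity steps: the dimension-shift vanishing $\mathrm{Ext}^{\geq 1}_R(L,J)=0$ for $L$ of finite projective dimension against a Gorenstein injective $J$, and the Hom-tensor-adjunction plus bicomplex reduction that promotes the acyclicity of $\mathrm{Hom}_R(C^\bullet,J)$ to the left $R\mbox{-GInj}$-acyclicity of the entire mapping cone. Once both steps are in place, the identification of the two $R\mbox{-GInj}$-coresolutions of $P^\bullet$ and the application of (the dual of) Proposition \ref{keyproposition} are essentially formal.
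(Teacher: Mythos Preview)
Your proposal is correct and follows the same overall architecture as the paper: reduce to showing that $\mathrm{Id}_{P^\bullet}\otimes_R\varepsilon$ is a left $R\mbox{-GInj}$-quasi-isomorphism, identify its mapping cone with the total complex of $P^\bullet\otimes_R C^\bullet$ (the paper writes $Y^\bullet$ for your $C^\bullet$), and kill that via a bounded-bicomplex spectral-sequence argument resting on an Ext-vanishing fact.

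The one genuine difference is \emph{which side you test acyclicity from}. The paper invokes Proposition~\ref{firstprop} to trade ``left $R\mbox{-GInj}$-acyclic'' for ``right $R\mbox{-GProj}$-acyclic'', then applies $\mathrm{Hom}_R(G,-)$ for $G$ Gorenstein projective; each column $P^i\otimes_R Y^\bullet$ is an injective resolution of $P^i$, so $\mathrm{Hom}_R(G,-)$ of it is acyclic by $\mathrm{Ext}^{\geq 1}_R(G,\mathcal{L})=0$. You instead stay on the GInj side, apply $\mathrm{Hom}_R(-,J)$ for $J$ Gorenstein injective, and use Hom--tensor adjunction to land in $\mathrm{Hom}_R(P^i,\mathrm{Hom}_R(C^\bullet,J))$; acyclicity then comes from the dual vanishing $\mathrm{Ext}^{\geq 1}_R(\mathcal{L},J)=0$ (immediate from the hereditary cotorsion pair $(\mathcal{L},R\mbox{-GInj})$ in Lemma~\ref{lem:left-Gorenstein}(1)) together with exactness of $\mathrm{Hom}_R(P^i,-)$. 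Your route avoids the balanced-pair switch at the cost of one adjunction; the paper's route keeps everything inside the GProj framework already set up in the proof of Theorem~\ref{thm:theoremB}. Both are clean, and your version arguably makes the finiteness driving the spectral-sequence convergence (boundedness of $C^\bullet$) slightly more visible.
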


\begin{proof} First note that $\eta_{G^\bullet}$ is an isomorphism if and
only if ${\rm Id}_{G^\bullet}\otimes_R \varepsilon:\;
G^\bullet\rightarrow G^\bullet\otimes I^\bullet$ is a left
$R\mbox{-GInj}$-quasi-isomorphism. The ``only if" part is clear
since $\theta_G^\bullet$ is a coresolution. For the ``if" part,
assume that ${\rm Id}_{G^\bullet}\otimes_R \varepsilon:\;
G^\bullet\rightarrow G^\bullet\otimes I^\bullet$ is left
$R\mbox{-GInj}$-quasi-isomorphism. Then by a similar argument as
above we get a unique chain map $\gamma_{G^\bullet}\colon G^\bullet
\otimes I^\bullet \rightarrow F(G^\bullet)$ such that $
\theta_{G}^\bullet= \gamma_{G}^\bullet \circ({\rm
Id}_{G^\bullet}\otimes_R \varepsilon)$. Then these two ``uniqueness"
imply that $\eta_{G}^\bullet$ and $\gamma_{G}^\bullet$ are inverse
to each other.

\vskip 3pt

Note that ${\rm Id}_{G^\bullet}\otimes_R \varepsilon:\;
G^\bullet\rightarrow G^\bullet\otimes_R I^\bullet$ is a left
$R\mbox{-GInj}$-quasi-isomorphism if and only if its mapping cone is
left $R\mbox{-GInj}$-acyclic, or equivalently, by Proposition
\ref{firstprop}, its mapping cone is right $R\mbox{-GProj}$-acyclic.
However the mapping cone is given by the tensor complex
$G^\bullet\otimes_R Y^\bullet$; here we denote by $Y^\bullet$ the
acyclic complex $0\rightarrow R \stackrel{\varepsilon}\rightarrow
I^0 \rightarrow I^1\rightarrow \cdots \rightarrow I^d \rightarrow
0$. So it suffices to show that for each complex $P^\bullet \in
\mathbf{K}(R\mbox{-Proj})$ the tensor complex $P^\bullet\otimes_R
Y^\bullet$ is right $R\mbox{-GProj}$-acyclic.

\vskip 3pt

Given any Gorenstein projective module $G$, we need to show that the
Hom complex ${\rm Hom}_R(G, P^\bullet\otimes_R Y^\bullet)$ is
acyclic. Note that the tensor complex $P^\bullet\otimes_R Y^\bullet$
is the total complex of a bicomplex $K^{\bullet, \bullet}$ such that
$K^{i, j}=P^i\otimes_R Y^j$. Therefore one sees that the complex
${\rm Hom}_R(G, P^\bullet\otimes_R Y^\bullet)$ is  the total complex
of the bicomplex ${\rm Hom}_R(G, K^{\bullet, \bullet})$. Associated
to this bicomplex, there exists a convergent spectral sequence
$E_2^{p, q} \underset{p}\Longrightarrow H^{p+q}({\rm Hom}_R(G,
P^\bullet\otimes_R Y^\bullet))$. Note that for each $i$, the column
complex $K^{i,\bullet}$ is an injective resolution of $P^i$. By the
second paragraph in the proof of Theorem \ref{thm:theoremB}, we
infer that $K^{i,\bullet}$ is right $R\mbox{-GProj}$-acyclic. Hence
the column complex ${\rm Hom}_R(G, K^{i, \bullet})$ is acyclic for
each $i$. Therefore, in the spectral sequence we see that $E_2$ (and
even $E_1$) vanishes. Thus we get $H^{n}({\rm Hom}_R(G,
P^\bullet\otimes_R Y^\bullet))=0$ for all $n\in \mathbb{Z}$. We are
done. \end{proof}

\vskip 5pt

\noindent {\bf Acknowledgements}\quad The author is indebted to the
anonymous referee for numerous suggestions which improve the
exposition very much. This paper is completed during the author's
visit in the University of Paderborn with a support by Alexander von
Humboldt Stiftung. He would like to thank Prof. Henning Krause and
the faculty of Institut fuer Mathematik for their hospitality. The
author would like to thank Prof. Edgar Enochs for suggesting him the
notion of cotorsion triple.

\bibliography{}

\vskip 20pt

{\footnotesize \noindent Xiao-Wu Chen, Department of Mathematics,
University of Science and Technology of
China, Hefei 230026, P. R. China \\
\emph{Current address}: Institut fuer Mathematik, Universitaet
Paderborn, 33095, Paderborn, Germany}
\end{document}